\UseAllTwocells \xyoption{frame} \CompileMatrices
\newtheorem{prop}{Proposition}[section]
\newtheorem{lem}[prop]{Lemma}
\newtheorem{thm}[prop]{Theorem}
\newtheorem{rmk}[prop]{Remark}
\newtheorem{defn}[prop]{Definition}
\newcommand{\noprint}[1]{}
\newcommand{\Ext}{\mbox{Ext}}
\newcommand{\Hom}{\mbox{Hom}}
\newcommand{\pt}{\mathop{pt}}
\newcommand{\E}{\mathop{\sf E}\nolimits}
\newcommand{\N}{\mathcal{N}}
\newcommand{\Tt}{{\mathfrak t}}
\newcommand{\zz}{{\mathbb Z}}
\newcommand{\hh}{{\mathbb H}}
\renewcommand{\ll}{{\mathbb L}}
\newcommand{\qq}{{\mathbb Q}}
\newcommand{\pp}{{\mathbb P}}
\newcommand{\cc}{{\mathbb C}}
\newcommand{\sC}{{\mathcal C}}
\newcommand{\sE}{{\mathcal E}}
\newcommand{\sI}{{\mathcal I}}
\newcommand{\sL}{{\mathcal L}}
\newcommand{\sS}{{\mathcal S}}
\newcommand{\sO}{{\mathcal O}}
\newcommand{\sX}{{\mathcal X}}
\newcommand{\sZ}{{\mathcal Z}}
\newcommand{\sF}{{\mathcal F}}
\newcommand{\Coh}{\mbox{Coh}}
\newcommand{\rE}{\mathscr{E}}
\newcommand{\rM}{\mathscr{M}}
\newcommand{\cHom}{\mathscr{H}om}
\DeclareMathOperator{\loc}{loc}
\DeclareMathOperator{\id}{id}
\DeclareMathOperator{\Hilb}{Hilb}
\DeclareMathOperator{\Aut}{Aut}
\DeclareMathOperator{\VW}{VW}
\DeclareMathOperator{\vw}{vw}
\DeclareMathOperator{\vir}{vir}
\DeclareMathOperator{\mov}{mov}
\DeclareMathOperator{\Pic}{Pic}
\DeclareMathOperator{\vd}{vd}
\DeclareMathOperator{\Ch}{Ch}
\DeclareMathOperator{\CR}{CR}
\DeclareMathOperator{\PD}{PD}
\DeclareMathOperator{\AJ}{AJ}
\DeclareMathOperator{\Higg}{Higg}
\DeclareMathOperator{\vb}{vb}
\DeclareMathOperator{\odd}{odd}
\DeclareMathOperator{\even}{even}
\newcommand{\rk}{\mathop{\rm rk}}
\newcommand{\ev}{\mathop{\rm ev}\nolimits}
\newcommand{\tr}{\mathop{\rm tr}\nolimits}
\renewcommand{\Im}{\mathop{\rm Im}}
\renewcommand{\top}{\mathop{\rm top}}
\newcommand{\rank}{\mathop{\rm rank}\nolimits}
\newcommand{\Jac}{\mathop{\rm Jac}\nolimits}
\newcommand{\proj}{\mathop{\rm Proj}\nolimits}
\numberwithin{equation}{subsection}
\newcommand {\mat}      [1] {\left(\begin{array}{#1}}
\newcommand {\rix}          {\end{array}\right)}
\title[VW invariants and S-duality]{The Vafa-Witten invariants via surface Deligne-Mumford stacks and S-duality}
\author{Yunfeng Jiang}
\address{Department of Mathematics\\ University of Kansas\\ 405 Snow Hall 1460 Jayhawk Blvd\\Lawrence KS 66045 USA} 
\email{y.jiang@ku.edu}
\begin{document}
\sloppy \maketitle
\begin{abstract}
Motivated by the S-duality conjecture of Vafa-Witten, Tanaka-Thomas have developed a theory of Vafa-Witten invariants for projective surfaces using the moduli space of Higgs sheaves.
Their definition and calculation prove the S-duality prediction of Vafa-Witten  in many cases in the side of gauge group $SU(r)$.  In this survey paper for ICCM-2019 we review the S-duality conjecture in physics by Vafa-Witten  and the definition of Vafa-Witten invariants for smooth projective surfaces and surface Deligne-Mumford stacks. We make a prediction that the Vafa-Witten invariants for Deligne-Mumford surfaces may give the generating series for the  Langlands dual group $^{L}SU(r)=SU(r)/\zz_r$. We survey a check  for the projective plane $\pp^2$. 
\end{abstract}

\maketitle

\tableofcontents

\section{Introduction}

In this paper we survey some results for the  Tanaka-Thomas's Vafa-Witten invariants for projective surfaces \cite{TT1}, \cite{TT2} and  two dimensional smooth Deligne-Mumford (DM) stacks (called surface DM stacks)  in \cite{JP}.   We provide evidences that the Vafa-Witten invariants of surface DM stacks may give  candidates for the invariants in algebraic geometry for the Langlands dual group $^{L}SU(r)=SU(r)/\zz_{r}$.

\subsection{History of S-duality}

The  motivation for the Vafa-Witten invariants from physics is the S-duality conjecture of $N =4$ supersymmetric Yang-Mills theory on a real 4-manifold $M$ \cite{VW}, where by physical duality theory  Vafa and Witten  \cite{VW} predicted that the generating (partition) function of the invariants of the moduli space of instantons  on projective surfaces should be modular forms.  

This theory involves coupling constants $\theta, g$ combined as follows
$$\tau:= \frac{\theta}{2\pi} + \frac{4\pi i}{g^2}.$$
The S-duality predicts that the transformation $\tau\to -\frac{1}{\tau}$ maps the partition function for gauge group $G$ to the partition function with Langlands dual gauge group $^{L}G$.  Vafa-Witten consider a $4$-manifold $M$ underlying a smooth projective surface $S$ over $\cc$ and $G =SU(r)$. The Langlands dual group 
$^{L}SU(r)=SU(r)/\zz_r$. We make these transformations more precise following \cite[\S 3]{VW}. 
Think $\tau$ as the parameter of the upper half plane $\hh$.  Let $\Gamma_0(4)\subset SL(2,\zz)$ be the subgroup
$$\Gamma_0(4)=\left\{
\mat{cc} a&b\\
c&d\rix\in SL(2,\zz):  4| c\right\}.$$
The group $\Gamma_0(4)$ acts on $\hh$ by
$$\tau\mapsto \frac{a\tau+b}{c\tau+d}.$$ The group $SL(2,\zz)$ is generated by transformations
$$S=\mat{cc} 0&-1\\
1&0\rix;  \quad T=\mat{cc} 1&1\\
0&1\rix.$$
From \cite{VW}, invariance under $T$ is the assertion that physics is periodic in $\theta$ with period $2\pi$, and 
$S$
is equivalent at $\theta=0$ to the transformation $\frac{g^2}{4\pi}\mapsto (\frac{g^2}{4\pi})^{-1}$ originally proposed by
Montonen and Olive \cite{MO}. One can check  that 
$T(\tau)=\tau+1$, and $S(\tau)=-\frac{1}{\tau}$. 

For a smooth projective surface $S$,  let $Z(\tau, SU(r))=Z(q, SU(r))$ be the partition function which counts the invariants of instanton moduli spaces, where $q=e^{2\pi i \tau}$.  We let $Z(\tau, SU(r)/\zz_r)$ be the  partition function which counts the invariants of $SU(r)/\zz_r$-instanton moduli spaces.  As pointed out in \cite[\S 3]{VW}, when some vanishing theorem holds, the invariants count the Euler characteristic of  instanton moduli spaces. We will see a mathematical meaning of this vanishing. 

Then the S-duality conjecture of Vafa-Witten can be stated as follows:  the transformation $T$ acts on $Z(q, SU(r))$, and the $S$-transformation sends
\begin{equation}\label{eqn_S_transformation}
Z\left(-\frac{1}{\tau}, SU(r)\right)=\pm r^{-\frac{\chi}{2}}\left(\frac{\tau}{i}\right)^{\frac{\omega}{2}}Z(\tau, SU(r)/\zz_r).
\end{equation}
for some $\omega$, where  $\chi:=\chi(S)$ is the topological Euler number of $S$.  Usually $\omega=-\chi$.
This is Formula (3.18) in \cite{VW}. 
In mathematics we think $Z(\tau, SU(r))=Z(q, SU(r))$ as the partition function which counts the invariants of  moduli space of vector bundles or Higgs bundles  on $S$.   Let 
$$\eta(q)=q^{\frac{1}{24}}\prod_{k\geq 1}(1-q^k)$$
be the Dedekind eta function.  Let 
$$\widehat{Z}(\tau, SU(r))=\eta(q)^{-w}\cdot Z(q, SU(r)),$$
and we will see that $\widehat{Z}(\tau, SU(r))$ is the partition function of the moduli space of Gieseker sable sheaves or stable Higgs sheaves.
Then S-duality predicts:
\begin{equation}\label{eqn_S_transformation_2}
\widehat{Z}\left(-\frac{1}{\tau}, SU(r)\right)=\pm r^{-\frac{-\chi}{2}}\widehat{Z}(\tau, SU(r)/\zz_r)
\end{equation}
which is coming from Formula (3.15) in \cite{VW}. 
The transformation  $T^4$ acts on the $SU(r)/\zz_r$-theory to itself; and 
$ST^4 S=\mat{cc} 1&0\\
4&1\rix$ will map the $SU(r)$-theory to itself.   Note that 
$\Gamma_0(4)=\langle T, ST^4 S\rangle$ is generated by $T, ST^4 S$. 
 In the case of a spin manifold, we get the subgroup of $SL(2,\zz)$ generated by $S$ and $ST^2S$, which is the group 
$$\Gamma_0(2)=\left\{
\mat{cc} a&b\\
c&d\rix\in SL(2,\zz):  2| c\right\}.$$  
Therefore if the S-duality conjecture holds, the partition $Z(\tau, SU(r))=Z(q, SU(r))$ is a modular form with modular group 
$\Gamma_0(4)$ or $\Gamma_0(2)$ if $M=S$ is a spin manifold.

In \cite[\S 4]{VW}, Vafa-Witten checked the S-duality for the cases $K3$ surface and $\pp^2$, and gave a formula  (5.38) of \cite[\S 5]{VW}) for general type surfaces.  For $\pp^2$, Vafa-Witten used the mathematical results of Klyachko and Yoshioka, and for $K3$ surfaces, they predicted the formula from physics.  

In mathematics side people have studied the invariants for a long time using Donaldson invariants.  In algebraic geometry the invariants are the Euler characteristic of the moduli space of Gieseker or slope stable coherent sheaves on $S$. This corresponds to the case in \cite{VW} such that the obstruction sheaves  vanish.     The blow-up formula of the Vafa-Witten invariants in this case was proved by Li-Qin in \cite{LQ}. 

But to the author's knowledge there are few theories or defining invariants in algebraic geometry for the Langlands dual group $SU(r)/\zz_r$.  
In the rank $2$ case, the Langlands dual group $SU(2)/\zz_2=SO(3)$. 
There exists some theory for the $SO(3)$-Donaldson invariants for the surface $S$, see \cite{KM}, \cite{Gottsche}, \cite{MW}.

\subsection{Vafa-Witten invariants by Tanaka-Thomas}
In differential geometry solutions of the Vafa-Witten equation on a projective surface $S$ are given by 
polystable Higgs bundles on the surface $S$, see \cite{TT1}. 
The moduli space of Higgs bundles 
has a partial compactification by Gieseker semistable Higgs pairs $(E,\phi)$ on $S$, where $E$ is a torsion free  coherent sheaf with rank $\rk >0$, and $\phi\in \Hom_{S}(E, E\otimes K_S)$ is a section called a Higgs field.  

In \cite{TT1}, \cite{TT2} Tanaka and Thomas defined the Vafa-Witten invariants using the moduli space $\N$ of Gieseker semi-stable Higgs pairs $(E,\phi)$ on $S$ with topological data $(\rk=\rank(E), c_1(E), c_2(E))$.  We use $\rk$ to represent the rank of the torsion free sheaf $E$, and it will be the same as $r$ when studying the invariants for the Langlands group 
$SU(r)/\zz_r$.  By spectral theory, the moduli space $\N$ of Gieseker semi-stable Higgs pairs $(E,\phi)$ on $S$ is isomorphic to the moduli space of Gieseker semi-stable torsion sheaves  
$\sE_\phi$ on the total space $X:=\mbox{Tot}(K_S)$.  If the semistability and stability coincide,  the moduli space $\N$ admits a symmetric obstruction theory in \cite{Behrend} since $X$ is a smooth Calabi-Yau threefold.  There exists a dimension zero virtual fundamental cycle 
$[\N]^{\vir}\in H_0(\N)$.  The moduli space $\N$ is not compact, but it admits a $\cc^*$-action induced by the 
$\cc^*$-action on $X$ by scaling the fibres of $X\to S$.  The $\cc^*$-fixed locus $\N^{\cc^*}$ is compact,  then from \cite{GP}, $\N^{\cc^*}$ inherits a perfect obstruction theory from $\N$ and one can define invariants using virtual localization.  

But the obstruction sheaf in this case implies that this invariant  is zero unless 
$H^{0,1}(S)=H^{0,2}(S)=0$, and the reason is that  the obstruction sheaf contains a trivial summand. 
The right invariants are defined by using the moduli space $\N_L^{\perp}$ of Higgs pairs with fixed determinant $L\in\Pic(S)$ and trace-free  $\phi$. 
Tanaka-Thomas have carefully studied the deformation and obstruction theory of the Higgs pairs instead of using the deformation and obstruction theory for the corresponding torsion sheaves, and constructed a  symmetric obstruction theory on $\N^{\perp}_{L}$.  
The space $\N^{\perp}_{L}$ still admits a $\cc^*$-action, therefore inherits a perfect obstruction theory on the fixed locus. 
The Vafa-Witten invariants  are defined as:
\begin{equation}\label{eqn_localized_SU_invariants_intr}
\VW(S):=\VW_{\rk, c_1, c_2}(S)=\int_{[(\N_L^{\perp})^{\cc^*}]^{\vir}}\frac{1}{e(N^{\vir})}.
\end{equation}
This corresponds to the $SU(\rk)$ gauge group.   Tanaka-Thomas did explicit calculations for some surfaces of general type in \cite[\S 8]{TT1} and verified some part of Formula (5.38) in \cite{VW}.  Since for such general type surfaces, the 
$\cc^*$-fixed loci contain components such that the Higgs fields are non-zero,  there are really contributions from the threefolds to the Vafa-Witten invariants.  This is the first time that the threefold contributions are made for the Vafa-Witten invariants. 
Some calculations and the refined version of the Vafa-Witten invariants have been studied in \cite{Thomas2}, \cite{MT}, \cite{GK}, \cite{Laarakker}.  It is worth mentioning that in \cite{TT2}, using the definition of Vafa-Witten invariants and also the semistable ones defined  by Joyce-Song pairs, Tanaka-Thomas calculate and prove the prediction of Vafa-Witten in \cite[\S 4]{VW} for the K3 surfaces for the gauge group $SU(r)$. 

We introduce another invariants using Behrend functions. 
In \cite{Behrend},  Behrend defined an integer valued constructible function 
$\nu_{\N}: \N^{\perp}_{L}\to \zz$
called the Behrend function.  We can define 
\begin{equation}\label{eqn_localized_Behrend_invariants_intr}
\vw(S):=\vw_{\rk, c_1, c_2}(S)=\chi(\N_L^{\perp}, \nu_{\N})
\end{equation}
where $\chi(\N_L^{\perp}, \nu_{\N})$ is the weighted Euler characteristic.  The $\cc^*$-action on $\N^{\perp}_{L}$ induces a cosection $\sigma: \Omega_{\N^{\perp}_{L}}\to \sO_{\N^{\perp}_{L}}$ in \cite{KL} by taking the dual of the associated vector field $v$ given by the $\cc^*$-action. The degenerate locus is the fixed locus 
$(\N^{\perp}_{L})^{\cc^*}$, therefore there exists a cosection localized virtual cycle 
$[\N^{\perp}_{L}]^{\vir}_{\loc}\in H_0((\N^{\perp}_{L})^{\cc^*})$, and 
$$\int_{[\N^{\perp}_{L}]^{\vir}_{\loc}}1=\chi(\N_L^{\perp}, \nu_{\N})$$
as proved in \cite{JT}, \cite{Jiang}. 
Tanaka-Thomas proved that in the case $\deg K_{S}< 0$ and the case that $S$ is a K3 surface, 
$\VW(S)=\vw(S)$. They also prove their corresponding  generalized Vafa-Witten invariants in \cite{JS} also agree, see \cite{TT2} for the Fano case and  \cite{MT} for the K3 surface case.  
Using the weighted Euler characteristic in \cite{TT2} Tanaka-Thomas calculated the generating series of the Vafa-Witten invariants for K3 surfaces.  The formula (5.25) in \cite{TT2}  matches the prediction in \cite[\S 4]{VW}.

\subsection{Vafa-Witten invariants for surface DM stacks}

In \cite{JP}, we generalized Tanaka-Thomas's theory to surface DM stacks. The motivation to study the Vafa-Witten invariants of Deligne-Mumford surfaces is the S-duality conjecture. 
\'Etale gerbes on $S$ are interesting DM stacks.  We propose that the Vafa-Witten invariants for some \'etale gerbes on $S$ will give the mathematical invariants for the Langlands dual group $SU(r)/\zz_r$. 
The proof for $\pp^2$ and $K3$ surfaces in rank two will be given in \cite{Jiang_twist}.  It is also very interesting to study 
Vafa-Witten invariants for other DM surfaces. For instance the global quotient orbifold K3 surface $[K3/G]$ provides interesting testing examples. 

Let $\sS$ be a smooth two dimensional DM stack, which we call it  a surface DM stack.  
Let $p: \sS\to S$ be the map to its coarse moduli space and  fix a polarization $\sO_S(1)$. 
Choose a generating sheaf $\Xi$ which is a locally free sheaf and relatively $p$-very ample, and for a coherent sheaf $E$ on $\sS$, the modified Hilbert polynomial is defined by:
$$H_{\Xi}(E,m) = \chi(\sS,E\otimes \Xi^{\vee}\otimes p^*\sO_S(m)).$$
Then we can write down 
$$H_{\Xi}(E,m) =\sum_{i=0}^2\alpha_{\Xi, i}\frac{m^i}{i!}.$$
The reduced Hilbert polynomial for pure sheaves, which is denoted  by $h_{\Xi}(E)$;  is the monic polynomial with rational coefficients 
$\frac{H_{\Xi}(E)}{\alpha_{\Xi, d}}$. 
Let $E$ be a pure coherent sheaf, it is semistable if for every proper subsheaf
$F \subset E$ we have  $h_{\Xi}(F) \leq h_{\Xi}(E)$ and it is stable if the same is true with a strict inequality.
Then fixing a modified Hilbert polynomial $H$, the moduli stack of semistable coherent sheaves 
$\rM:=\rM^{\Xi}_H$ on $\sS$ is constructed in \cite{Nironi}.  If the stability and semistability coincide, the coarse moduli space 
$\rM$ is a projective scheme. 

The Higgs pair $(E,\phi)$  is semistable if for every proper $\phi$-invariant subsheaf
$F \subset E$ we have  $h_{\Xi}(F) \leq h_{\Xi}(E)$.  Let $\N:=\N_H$ be the moduli stack of stable Higgs pairs on $\sS$ with modified Hilbert polynomial $H$.  
Let $\sX:=\mbox{Tot}(K_{\sS})$ be the canonical line bundle of $\sS$, then $\sX$ is a smooth Calabi-Yau threefold DM stack.  
By spectral theory again, the category of Higgs pairs on $\sS$ is equivalent to the category of torsion sheaves $\sE_\phi$  on $\sX$ supporting on $\sS\subset \sX$.
Let $\pi: \sX\to \sS$ be the projection, then  the bullback $\pi^*\Xi$ is a generating sheaf for $\sX$.  One can take a projectivization $\overline{\sX}=\proj (K_{\sS}\oplus\sO_{\sS})$, and consider the moduli space of stable torsion sheaves 
on $\overline{\sX}$ with modified Hilbert polynomial $H$. The open part that is supported on the zero section  $\sS$ is isomorphic to the moduli stack of stable Higgs pairs $\N$ on $\sS$ with modified Hilbert polynomial $H$. 

There is a symmetric perfect obstruction theory on  the moduli space $\N^{\perp}_{L}$ of stable Higgs pairs 
$(E,\phi)$ with fixed determinant $L$ and trace-free on $\phi$, see \cite{JP}.    We define 
\begin{equation}\label{eqn_localized_SU_DM_invariants_intr}
\VW^L_{H}(\sS)=\int_{[(\N_L^{\perp})^{\cc^*}]^{\vir}}\frac{1}{e(N^{\vir})}.
\end{equation}
Also we have the Behrend function in this case 
and the invariant
$
\vw^L_{H}(\sS)=\chi(\N^{\perp}_{L}, \nu_{\N^{\perp}_{L}})
$
is the weighted Euler characteristic. 

\subsection{Calculations and checking S-duality}

The moduli space $\N_L^{\perp}$ admits a $\cc^*$-action induced by the $\cc^*$-action on the total space $\sX$ of the canonical line bundle $K_{\sS}$.  There are two type of $\cc^*$-fixed loci on $\N_L^{\perp}$ .  The first one corresponds to the $\cc^*$-fixed Higgs pairs $(E,\phi)$ such that the Higgs fields $\phi=0$.  Hence the fixed locus is just the moduli space $\rM_{L}(\sS)$ of stable torsion free sheaves $E$ on $\sS$. This is called the {\em Instanton Branch} as in \cite{TT1}.  The second type corresponds to $\cc^*$-fixed Higgs pairs $(E,\phi)$ such that the Higgs fields $\phi\neq 0$.  This case mostly happens when the surfaces $\sS$ are general type, and this component is called the {\em Monopole} branch.  See \S \ref{subsec_CStar_fixed_locus} for more details. 
In \cite{JP}, we calculate some invariants for the monopole branch of root stacks over a quintic surface $Q\subset \pp^3$; and quintic surface with ADE singularities;  and we survey some results here.  

We survey a check for the S-duality conjecture (\ref{eqn_S_transformation}) and  (\ref{eqn_S_transformation_2}) in rank two  for the projective plane $\pp^2$ based on the former calculation in \cite{GJK}, which is a result in a general proposal to attack the S-duality conjecture in \cite{Jiang_twist}.

\subsection{Outline} 
This survey paper is outlined as follows.  We review the Vafa-Witten invariants  for  a surface DM stack $\sS$  in \S \ref{sec_VW_surfaceDM}.
In \S \ref{sec_calculations} we survey the calculations on the surface DM stacks, where in \S \ref{subsec_root_stacks} we calculate the case of the $r$-th root stack $\sS$ over a smooth quintic surface $S$; and in \ref{subsec_quintic_ADE} we deal with the quintic surfaces with ADE singularities.  Finally in \S \ref{sec_S_duality_P2} we check the S-duality conjecture for the projective plane $\pp^2$ and discuss a proposal to attack the general surfaces. 

\subsection{Convention}
We work over $\cc$ throughout of the paper.   We use Roman letter $E$ to represent a coherent sheaf on a projective DM stack or a surface DM stack $\sS$, and use curl latter $\sE$ to represent the sheaves on the total space Tot$(\sL)$ of a line bundle $\sL$ over $\sS$. 
We reserve {\em $\rk$} for the rank of the torsion free coherent sheaves $E$, and use $\sqrt[r]{(S,C)}$ for the $r$-th root stack associated with the pair $(S, C)$ for a smooth projective surface and $C\subset S$ a smooth connected divisor. 

We keep the convention in \cite{VW} to use $SU(r)/\zz_r$ as the Langlands dual group of $SU(r)$.  When discussing the S-duality the rank $\rk=r$.

\subsection*{Acknowledgments}

Y. J. would like to thank the invitation by ICCM-2019, Tsinghua University Beijing,  for writing this survey paper, and thank Huai-Liang Chang, Huijun Fan,  Amin Gholampour, Shui Guo, Martijn Kool, Wei-Ping Li, Zhenbo Qin, Richard Thomas and Hsian-Hua Tseng for valuable discussions on the Vafa-Witten invariants and related fields.  This work is partially supported by  NSF DMS-1600997.


\section{Vafa-Witten theory for Deligne-Mumford surfaces}\label{sec_VW_surfaceDM}

In this section we work on a smooth surface DM stack $\sS$. The basic knowledge of stacks can be found in the book \cite{LMB}.   Several interesting examples and basic knowledge were reviewed in \cite{JP}.

\subsection{Moduli space of semistable Higgs  sheaves on surface DM stacks}\label{subsec_moduli_Higgs_pairs}

We choose the polarization $\sO_S(1)$ on the coarse moduli space $p: \sS\to S$. 

\begin{defn}\label{defn_very_ample}
A locally free sheaf $\Xi$ on $\sS$ is $p$-very ample if  for every geometric point of $\sS$ the representation of the stabilizer group at that point contains every irreducible representation of the stabilizer group. We call $\Xi$ a generating sheaf. 
\end{defn}
Let 
$\sO_S(1)$ be the very ample invertible sheaf on $S$, and $\Xi$ a generating sheaf on $\sS$. We call the pair $(\Xi, \sO_S(1))$ a polarization of $\sS$. 

Let us define the Gieseker stability condition:
\begin{defn}\label{defn_Gieseker}
The modified Hilbert polynomial of a coherent sheaf $F$ on $\sS$ is defined as:
$$H_{\Xi}(F,m)=\chi(\sS, F\otimes\Xi^{\vee}\otimes p^*\sO_{\sS}(m))
=\chi(S, F_{\Xi}(F)(m))$$
where $F_{\Xi}: D\Coh(\sS) \to D\Coh(\sS)$ is the functor defined by
$$F \mapsto p_*\cHom_{\sO_{\sS}}(\Xi, F).$$
\end{defn}
\begin{rmk}
\begin{enumerate}
\item  Let $F$ be of dimension $d$, then we can write:
$$H_{\Xi}(F,m)=\sum_{i=0}^{d}\alpha_{\Xi, i}(F)\frac{m^i}{i!}$$
which is induced by the case of schemes.
\item Also the modified Hilbert polynomial is additive on short exact sequences since the functor $F_{\Xi}$ is exact. 
\item If we don't choose the generating sheaf $\Xi$, the Hilbert polynomial $H$ on $\sS$ will be the same as the Hilbert polynomial on the coarse moduli space $S$.  In order to get interesting information on the DM stack $\sS$, the sheaf $\Xi$ is necessary.  For example, in \cite[\S 7]{Nironi}, \cite{Jiang2} the modified Hilbert polynomial on a root stack $\sS$ will corresponds to the parabolic Hilbert polynomial on the pair $(S,D)$ with $D\subset S$ a smooth divisor. 
\end{enumerate}
\end{rmk}

\begin{defn}\label{defn_reduced_hilbert}
The {\em reduced modified Hilbert polynomial} for the pure sheaf $F$ is defined as 
$$h_{\Xi}(F)=\frac{H_{\Xi}(F)}{\alpha_{\Xi, d}(F)}.$$
\end{defn}

\begin{defn}\label{defn_stability}
Let $F$ be a pure coherent sheaf.  We call $F$ semistable if for every proper subsheaf $F^\prime\subset F$, 
$$h_{\Xi}(F^\prime)\leq h_{\Xi}(F).$$
We call $F$ stable if $\leq$ is replaced by $<$ in the above inequality.
\end{defn}

\begin{defn}\label{defn_slope_stability}
 We define the slope of  $F$ by 
 $\mu_{\Xi}(F)=\frac{\alpha_{\Xi, d-1}(F)}{\alpha_{\Xi,d}(F)}$.
 Then $F$ is
semistable if for every proper subsheaf $F^\prime\subset F$, 
$\mu_{\Xi}(F^\prime)\leq \mu_{\Xi}(F)$.
We call $F$ stable if $\leq$ is replaced by $<$ in the above inequality.
\end{defn}

\begin{rmk}\label{rem_stability}
\begin{enumerate}
\item The notion of $\mu$-stability and semistability is related to the Gieseker stability and semistability in the same way as schemes, i.e.,
$$\mu-\text{stable}\Rightarrow \text{Gieseker stable}\Rightarrow \text{Gieseker semistable}\Rightarrow \mu-\text{semistable}$$
\item
The stability really depends on the generating sheaf $\Xi$.  This stability is not necessarily the same as the ordinary Gieseker stability even when $\sS$ is a scheme.  
\item 
One can define the rank 
$\rk F_{\Xi}(F)=\frac{\alpha_{\Xi, d}(F)}{\alpha_{d}(\sO_{S})}$.
\end{enumerate}
\end{rmk}

Let us fix a polarization $(\Xi, \sO_{S}(1))$ on $\sS$, and a modified Hilbert polynomial $H$.  There exists a moduli stack 
$\rM$ of semistable torsion free sheaves with Hilbert polynomial $H$ and $\rM$ is a global GIT quotient stack.  
 The coarse moduli space $\overline{\rM}$ of $\rM$ is 
 is a projective scheme.  Moreover, the stable locus $\overline{\rM}^{s}\subset \overline{\rM}$ is an open quasi-projective scheme.

The Higgs sheaves on $\sS$ is defined as follows. 
Let 
$\sX:=\mbox{Tot}(K_{\sS})$
be the total space of the canonical line bundle $K_{\sS}$ on $\sS$.  Since $\sS$ is a smooth two dimensional DM stack, $K_{\sS}$ exists as a line bundle.  The total space $\sX$ is a Calabi-Yau threefold DM stack.  

Let us fix a line bundle $\sL$ on $\sS$. 
A $\sL$-Higgs pair on $\sS$ is given by $(E,\phi)$, where $E\in \Coh(\sS)$ is a torsion free coherent sheaf and 
$$\phi\in \Hom(E, E\otimes \sL)$$
is a section.  We have:
\begin{prop}\label{prop_equivalent_categories}(\cite[Proposition 2.18]{JP})
There exists an abelian category $\Higg_{\sL}(\sS)$ of Higgs pairs on $\sS$ and an equivalence:
\begin{equation}\label{eqn_equivalence_categories}
\Higg_{\sL}(\sS)\stackrel{\sim}{\longrightarrow} \Coh_{c}(\sS)
\end{equation}
where $\Coh_{c}(\sS)$ is the category of compactly supported coherent sheaves on $\sX$. 
\end{prop}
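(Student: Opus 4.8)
The statement is the standard "spectral correspondence" for Higgs pairs, now in the Deligne–Mumford setting, so the plan is to mimic the classical construction of Simpson/BNR while checking that nothing in the argument uses more than the structure of $\sS$ as a smooth DM stack. First I would set up the push-pull dictionary along $\pi\colon \sX=\mathrm{Tot}(K_{\sS})\to\sS$. Since $\pi$ is affine, pushforward $\pi_*$ is exact and gives an equivalence between quasi-coherent sheaves on $\sX$ and quasi-coherent $\pi_*\sO_{\sX}$-modules on $\sS$; and $\pi_*\sO_{\sX}=\bigoplus_{n\ge 0}\sL^{-n}=\Sym^{\scriptscriptstyle\bullet}_{\sO_{\sS}}(\sL^{-1})$ with $\sL=K_{\sS}$ (more generally $\sL$ for the $\sL$-Higgs version). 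This is the only place one must be a little careful on a stack, but affineness of a line-bundle total space and the identification of its pushforward with the symmetric algebra are étale-local statements on $\sS$, hence go through verbatim; the formation of $K_{\sS}$ as an honest line bundle was already granted in the excerpt.

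Given this, I would define the functor in each direction explicitly. From a Higgs pair $(E,\phi)$ with $\phi\colon E\to E\otimes\sL$, one makes $E$ into a $\Sym^{\scriptscriptstyle\bullet}(\sL^{-1})$-module by letting the tautological section of $\pi^*\sL$ act via $\phi$ (equivalently, the generator of $\sL^{-1}$ acts by the adjoint of $\phi$), and sets $\sE_\phi:=\pi^*E$ twisted by this module structure, i.e. the coherent sheaf on $\sX$ corresponding to that $\pi_*\sO_{\sX}$-module. Conversely, a coherent sheaf $\sF$ on $\sX$ with proper (equivalently, compact, equivalently finite-over-$\sS$) support pushes to $\pi_*\sF$, which is coherent on $\sS$ because $\pi|_{\mathrm{supp}\,\sF}$ is finite; the action of the fiber coordinate endows $\pi_*\sF$ with an endomorphism, i.e. a map $\pi_*\sF\to\pi_*\sF\otimes\sL$, giving back a Higgs pair. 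One checks these are mutually inverse exact functors and that morphisms correspond, which is formal once the module-theoretic reformulation is in place; exactness on both sides is where additivity of the modified Hilbert polynomial (already noted in the Remark after Definition 2.3) later becomes compatible with the correspondence.

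The step I expect to need the most care is the matching of the finiteness/compact-support condition: one must show that $\sE_\phi$ always has support finite over $\sS$ (clear: it is scheme-theoretically supported on the closed substack cut out by the characteristic polynomial of $\phi$, which is finite over $\sS$ since $E$ is coherent), and conversely that an arbitrary compactly supported coherent sheaf on $\sX$ is automatically a module over $\Sym^{\scriptscriptstyle\bullet}(\sL^{-1})$ that is coherent over $\sS$ — this is exactly the assertion that proper support over a point of $\sS$ forces finiteness over $\sS$ for a sheaf on the line-bundle total space, which follows from properness plus affineness of the fibers. I would also record that the equivalence is compatible with the projective completion $\overline{\sX}=\proj(K_{\sS}\oplus\sO_{\sS})$: a compactly supported sheaf on $\sX$ extends by zero to a sheaf on $\overline{\sX}$ not meeting the divisor at infinity, and the induced exact functor identifies $\Higg_{\sL}(\sS)$ with the full subcategory of $\Coh(\overline{\sX})$ of sheaves disjoint from $\overline{\sX}\setminus\sX$ — this is what is needed downstream to build the moduli space. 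Finally, since all constructions are functorial in $\sS$ for étale maps and everything above descends, the DM-stack case reduces to the scheme case already present in the literature, so the proof is essentially a verification that the classical spectral construction is étale-local and hence stackable; I do not anticipate a genuine obstruction, only bookkeeping.
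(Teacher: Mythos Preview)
Your proposal is correct and follows exactly the standard spectral-correspondence argument; the paper itself gives no proof here, merely citing \cite[Proposition 2.18]{JP}, but the surrounding text (the identification $\pi_*\sO_{\sX}=\bigoplus_{i\ge0}\sL^{-i}\eta^i$ and the resolution (\ref{eqn_rE_quotient})) makes clear that precisely your $\pi_*\sO_{\sX}$-module dictionary is what is intended. The reduction to the scheme case by \'etale-locality is the expected way to handle the DM setting, and your treatment of the compact-support $\Leftrightarrow$ finite-over-$\sS$ step is the one nontrivial point, which you address correctly.
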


The Gieseker stability on the Higgs pairs $(E,\phi)$ is similarly defined.  Let us fix a generating sheaf $\Xi$ on $\sS$. Then for any coherent sheaf $E\in\Coh(\sS)$ we have the modified Hilbert polynomial $h_{\Xi}(E)$.

\begin{defn}\label{defn_Gieseker_Higgs}
The $\sL$-Higgs pair $(E,\phi)$ is said to be Gieseker stable with respect to the polarization $(\Xi, \sO_{S}(1))$ if and only if 
$$h_{\Xi}(F)<h_{\Xi}(E)$$
for every proper $\phi$-invariant subsheaf $F\subset E$. 
\end{defn}

There exists a moduli stack 
$\N:=\N^H_{\Xi}(\sS)$ parametrizing stable Higgs sheaves
 with modified Hilbert polynomial $H$.
Then $\N$ is also represented by a GIT quotient stack with coarse moduli space a  quasi-projective scheme.

\subsection{Obstruction theory and the Vafa-Witten invariants}\label{sec_VW}

Let $\pi: \sX:=\mbox{Tot}(\sL)\to \sS$ be the projection from the total space of the line bundle $\sL$ to $\sS$. Then from the spectral theory a coherent sheaf $\sE$ on $\sX$ is equivalent to a 
$\pi_{*}\sO_{\sX}=\bigoplus_{i\geq 0}\mathcal{L}^{-i}\eta^i$-module, where $\eta$ is the tautological section of $\pi^*\mathcal{L}$. 

From \cite[\S 2.2]{TT1}, given a Higgs pair $(E, \phi)$, we have the torsion sheaf $\sE_{\phi}$ of $\sX$ supported on $\sS$.  $\sE_\phi$ is generated by its sections down on $\pi$ and we have a natural surjective morphism
\begin{equation}\label{eqn_rE_quotient}
0\to \pi^*(E\otimes\sL^{-1})\stackrel{\pi^*\phi-\eta}{\longrightarrow}\pi^*E=\pi^*\pi_*\sE_{\phi}\stackrel{\ev}{\longrightarrow}\sE_\phi\to 0
\end{equation}
with kernel $\pi^*(E\otimes\sL^{-1})$ as in Proposition 2.11 of \cite{TT1}.  All the arguments in 
\cite[Proposition 2.11]{TT1} work for smooth DM stack $\sS$ and $\sX$. 

The deformation of $\sE$ on $\sX$ is governed by $\Ext^*_{\sX}(\sE, \sE)$, while the Higgs pair $(E,\phi)$ is governed by the cohomology groups of the total complex
$$R\cHom_{\sS}(E, E)\stackrel{[\cdot, \phi]}{\longrightarrow}R\cHom_{\sS}(E,E\otimes\sL).$$
By some homological algebra proof as in \cite[Proposition 2.14]{TT1}, we have the exact triangle:
\begin{equation}\label{eqn_deformation1}
R\cHom(\sE_\phi,\sE_\phi)\to R\cHom_{\sS}(E, E)\stackrel{\circ\phi-\phi\circ}{\longrightarrow}R\cHom_{\sS}(E\otimes\sL^{-1},E).
\end{equation}
Taking cohomology of $(\ref{eqn_deformation1})$ we get 
\begin{equation}\label{eqn_coh_deformation1}
\cdots\to\Hom(E,E\otimes K_{\sS})\to \Ext^1(\sE_\phi, \sE_{\phi})\to \Ext^1(E,E)\to \cdots
\end{equation}
which relates the automorphisms, deformations and obstructions of $\sE_\phi$ to those of $(E,\phi)$.

Let $\sS\to B$ be a family of surface DM stacks $\sS$, i.e., a smooth projective morphism with the fibre surface DM stack, and let $\sX\to B$ be the total space of the a line bundle $\sL=K_{\sS/B}$. 
Let $\N^{H}$ denote the moduli space of Gieseker stable Higgs pairs on the fibre of $\sS\to B$ with fixed rank $r> 0$ and Hilbert polynomial $H$ (a fixed generating sheaf $\Xi$).    

We pick a (twisted by the $\cc^*$-action) universal sheaf $\rE$ over $\N\times_{B}\sX$.  We use the same $\pi$ to represent the projection 
$$\pi: \sX\to \sS; \quad  \pi: \N\times_{B}\sX\to \N\times_{B}\sS.$$
Since $\rE$  is flat over $\N$ and $\pi$ is affine, 
$$\E:=\pi_*\rE \text{~on~} \N\times_{B}\sS$$
is flat over $\N$.  $\E$ is also coherent because it can be seen locally on $\N$. Therefore it defines a classifying map:
$$\Pi: \N\to \rM$$
by
$$\sE\mapsto \pi_*\sE; \quad  (E,\phi)\mapsto E,$$
where $\rM$ is the moduli stack of coherent sheaves on the fibre of $\sS\to B$ with Hilbert polynomial $H$. 
For simplicity, we use the same $\E$ over $\rM\times \sS$ and $\E=\Pi^*\E$ on $\N\times \sS$. 
Let 
$$p_{\sX}:  \N\times_{B}\sX\to \N; \quad   p_{\sS}:  \N\times_{B}\sS\to \N$$
be the projections.  Then (\ref{eqn_deformation1}) becomes:
\begin{equation}\label{eqn_deformation2}
R\cHom_{p_{\sX}}(\rE, \rE)\stackrel{\pi_*}{\longrightarrow}R\cHom_{p_{\sS}}(\E, \E)\stackrel{[\cdot, \phi]}{\longrightarrow}R\cHom_{p_{\sS}}(\E, \E\otimes\sL).
\end{equation}
Let $\sL=K_{\sS/B}$ and
taking the relative Serre dual of the above exact triangle we get 
$$R\cHom_{p_{\sS}}(\E, \E)[2]\to R\cHom_{p_{\sS}}(\E, \E\otimes K_{\sS/B})[2]\to R\cHom_{p_{\sX}}(\rE, \rE)[3].$$
\begin{prop}(\cite[Proposition 2.21]{TT1})\label{prop_self_dual}
The above exact triangle  is the same as (\ref{eqn_deformation2}), just shifted.
\end{prop}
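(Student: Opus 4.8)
The plan is to follow the strategy of \cite[Proposition 2.21]{TT1}, adapting each step to the Deligne-Mumford setting, where the main new ingredient is relative Serre duality for the smooth proper DM-morphism $\sX \to B$ (equivalently $\overline{\sX} \to B$) together with compatibility of the dualizing complex with the spectral construction. Concretely, I would first record that the total space $\sX = \mbox{Tot}(K_\sS)$ is a Calabi-Yau threefold DM stack over $B$, so that $\omega_{\sX/B} \cong \pi^* K_\sS \otimes \pi^* K_\sS^{-1} = \sO_\sX$ (the fibre direction contributes $\pi^* K_\sS^{-1}$ and the base surface direction contributes $\pi^* K_\sS$). The key point is that Serre duality holds for DM stacks that are proper over the base — this is standard (e.g.\ via Nironi's or the Laumon--Moret-Bailly framework, or Toen's GRR for DM stacks) — applied here to the compactly supported complexes $R\cHom_{p_\sX}(\rE,\rE)$, which are perfect and supported properly over $\N$ since $\rE$ is supported on the zero section $\N \times_B \sS \subset \N \times_B \sX$.

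The heart of the proof is the same homological-algebra manipulation as in \cite[Proposition 2.21]{TT1}. Starting from the exact triangle (\ref{eqn_deformation2}), I would apply the relative Serre duality functor $R\cHom_{p_\sX}(-, p_\sX^! \sO_\N) = R\cHom_{p_\sX}(-, \sO)[3]$ to the term $R\cHom_{p_\sX}(\rE,\rE)$, which is self-dual up to the shift by $3$ because $\sX$ is Calabi--Yau of relative dimension $3$; likewise $R\cHom_{p_\sS}(\E,\E)$ is Serre-dual (relative dimension $2$) to $R\cHom_{p_\sS}(\E, \E \otimes K_{\sS/B})[2]$. Dualizing the whole triangle (\ref{eqn_deformation2}) and rotating therefore produces the displayed triangle
$$R\cHom_{p_{\sS}}(\E, \E)[2]\to R\cHom_{p_{\sS}}(\E, \E\otimes K_{\sS/B})[2]\to R\cHom_{p_{\sX}}(\rE, \rE)[3].$$
What must then be checked is that the connecting maps of the dualized triangle coincide (up to the asserted shift and sign) with the maps $\pi_*$ and $[\cdot,\phi]$ of (\ref{eqn_deformation2}); this is the step requiring the spectral-correspondence bookkeeping, using the resolution (\ref{eqn_rE_quotient}) $0 \to \pi^*(E\otimes \sL^{-1}) \xrightarrow{\pi^*\phi - \eta} \pi^* E \to \sE_\phi \to 0$ to express $R\cHom_{p_\sX}(\rE,\rE)$ in terms of $R\cHom_{p_\sS}$ of $\E$ with itself twisted by powers of $K_{\sS/B}$, and then identifying the Serre-duality pairing with the obvious one. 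Since all of (\ref{eqn_rE_quotient}), (\ref{eqn_deformation1}), (\ref{eqn_deformation2}) were already verified to carry over verbatim to smooth DM stacks in the text, this identification goes through unchanged.

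The main obstacle I anticipate is not any single hard computation but the verification that Grothendieck--Serre duality is available in exactly the form needed: one needs the duality isomorphism for the (non-representable, but proper and DM) morphisms $p_\sS$ and $p_\sX$ restricted to objects with proper support, compatibly with base change over $B$ and compatibly with the pushforward $\pi_*$. For $p_\sS$ this is duality for a smooth proper surface DM stack over a base, which is classical; for $p_\sX$ one works with the compactification $\overline{\sX} = \proj(K_\sS \oplus \sO_\sS)$ so that $p_{\overline\sX}$ is proper, notes that $\rE$ and all the relevant $\cHom$-complexes are supported on the zero section hence unaffected by the compactification, and invokes duality there with dualizing complex $\omega_{\overline{\sX}/B}[3]$, restricting back to the open locus where it is trivial. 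Once this duality machinery is in place the argument is a formal consequence, exactly as in the scheme case; I would therefore state it as ``the proof of \cite[Proposition 2.21]{TT1} applies verbatim, using Serre duality for proper smooth Deligne--Mumford stacks over $B$ in place of the projective-variety version,'' and only spell out the Calabi--Yau computation $\omega_{\sX/B} \cong \sO_\sX$ and the support argument in detail.
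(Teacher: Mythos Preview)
Your proposal is correct, and in fact goes well beyond what the paper does: the paper gives no proof at all for this proposition, simply stating it as a citation of \cite[Proposition 2.21]{TT1} and moving directly on to the commutative diagram. Your sketch of how the Tanaka--Thomas argument adapts to the DM setting --- relative Serre duality for the proper DM morphism via the compactification $\overline{\sX}$, the Calabi--Yau identification $\omega_{\sX/B}\cong\sO_{\sX}$, and the support argument allowing one to pass between $\sX$ and $\overline{\sX}$ --- is exactly the content one would need to supply, and is consistent with the paper's earlier remark that ``all the arguments in \cite[Proposition 2.11]{TT1} work for smooth DM stack $\sS$ and $\sX$.''
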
 

Then the exact triangle  (\ref{eqn_deformation2}) fits into the following commutative diagram 
(\cite[Corollary 2.22]{TT1}):
\[
\xymatrix{
R\cHom_{p_{\sS}}(\E, \E\otimes K_{\sS/B})_{0}[-1]\ar[r]\ar@{<->}[d] &R\cHom_{p_{\sX}}(\rE, \rE)_{\perp}
\ar[r]\ar@{<->}[d] & R\cHom_{p_{\sS}}(\E, \E)_{0}\ar@{<->}[d]\\
R\cHom_{p_{\sS}}(\E, \E\otimes K_{\sS/B})[-1]\ar[r]\ar@{<->}[d]_{\id}^{\tr} &R\cHom_{p_{\sX}}(\rE, \rE)
\ar[r]\ar@{<->}[d] & R\cHom_{p_{\sS}}(\E, \E)\ar@{<->}[d]_{\id}^{\tr}\\
Rp_{\sS *}K_{\sS/B}[-1]\ar@{<->}[r]& Rp_{\sS *}K_{\sS/B}[-1]\oplus Rp_{\sS *}\sO_{\sS}\ar@{<->}[r]& Rp_{\sS *}\sO_{\sS}
}
\]
where $(-)_0$ denotes the trace-free Homs.  The $R\cHom_{p_{\sX}}(\rE, \rE)_{\perp}$ is the co-cone of the middle column and it will provide the symmetric obstruction theory of the moduli space $\N_{L}^{\perp}$ of stable trace free fixed determinant Higgs pairs.

From Proposition \ref{prop_self_dual}, in Appendix of  \cite{JP} we review that the truncation $\tau^{[-1,0]}R\cHom_{p_{\sX}}(\rE, \rE)$ defines a symmetric perfect obstruction theory on the moduli space $\N$. 
The total space $\sX=\mbox{Tot}(K_{\sS})\to \sS$ admits a $\cc^*$-action which has weight one on the fibres.  The obstruction theory is naturally $\cc^*$-equivariant. 
From \cite{GP}, the $\cc^*$-fixed locus $\N^{\cc^*}$ inherits a perfect obstruction theory
\begin{equation}\label{eqn_deformation_obstruction_fixed_locus}
\left(\tau^{[-1,0]}(R\cHom_{p_{\sX}}(\rE,\rE)[2])\Tt^{-1}\right)^{\cc^*}\to  \ll_{\N^{\cc^*}}
\end{equation}
by taking the fixed part.  Therefore it induces a virtual fundamental cycle 
$$[\N^{\cc^*}]^{\vir}\in H_*(\N^{\cc^*}).$$
The virtual normal bundle is given 
$$N^{\vir}:=\left(\tau^{[-1,0]}(R\cHom_{p_{\sX}}(\rE,\rE)[2]\Tt^{-1})^{\mov}\right)^{\vee}
=\tau^{[0,1]}(R\cHom_{p_{\sX}}(\rE,\rE)[1])^{\mov}$$
which is the derived dual of the moving part of $\tau^{[-1,0]}(R\cHom_{p_{\sX}}(\rE,\rE)[2])\Tt^{-1}$.

Consider the localized invariant
$$\int_{[\N^{\cc^*}]^{\vir}}\frac{1}{e(N^{\vir})}.$$
We explain this a bit.  Represent $N^{\vir}$ as a $2$-term complex $[E_0\to E_1]$ of locally free $\cc^*$-equivariant sheaves with non-zero weights and define 
$$e(N^{\vir}):=\frac{c_{\top}^{\cc^*}(E_0)}{c_{\top}^{\cc^*}(E_1)}\in H^*(\N^{\cc^*}, \zz)\otimes \qq[t, t^{-1}],$$
where $t=c_1(\Tt)$ is the generator of $H^*(B\cc^*)=\zz[t]$, and $c_{\top}^{\cc^*}$ denotes the $\cc^*$-equivariant top Chern class lying in $H^*(\N^{\cc^*}, \zz)\otimes_{\zz[t]} \qq[t, t^{-1}]$. 

\begin{defn}\label{defn_VW1}
Let $\sS$ be a smooth projective surface DM stack. Fixing a generating sheaf $\Xi$ on $\sS$, and a Hilbert polynomial $H$ associated with $\Xi$. Let $\N:=\N_H$ be the moduli space of stable Higgs pairs with  Hilbert polynomial $H$.  Then the primitive Vafa-Witten invariants of $\sS$ is defined as:
$$\widetilde{\VW}_{H}(\sS):=\int_{[\N^{\cc^*}]^{\vir}}\frac{1}{e(N^{\vir})}\in \qq$$
which is referred as $U(\rk)$-Vafa-Witten invariants. 
\end{defn}

\begin{rmk}
We have 
$$\Ext^\bullet_{\sX}(\sE_\phi, \sE_\phi)=H^{\bullet-1}(K_{\sS})\oplus H^{\bullet}(\sO_{\sS})\oplus 
\Ext^{\bullet}_{\sX}(\sE_\phi, \sE_\phi)_{\perp},$$
where $\Ext^{\bullet}_{\sX}(\sE_\phi, \sE_\phi)_{\perp}$ is the trace zero part with determinant $L\in \Pic(\sS)$. 
Hence the obstruction sheaf in the obstruction theory (\ref{eqn_deformation_obstruction_fixed_locus}) has a trivial summand $H^2(\sO_{\sS})$. 
So $[\N^{\cc^*}]^{\vir}=0$ is $h^{0,2}(\sS)>0$.  If $h^{0,1}(\sS)\neq 0$, then tensoring with flat line bundle makes the obstruction theory invariant. Therefore the integrand is the  pullback from $\N/\Jac(\sS)$, which is a lower dimensional space, hence zero. 
\end{rmk}

\subsection{$SU(\rk)$ Vafa-Witten invariants}

Let us now fix $(L, 0)\in \Pic(\sS)\times \Gamma(K_{\sS})$, and let $\N^{\perp}_{L}$ be the fibre of 
$$\N/\Pic(\sS)\times\Gamma(K_{\sS}).$$
Then moduli space $\N^{\perp}_{L}$ of stable Higgs sheaves $(E,\phi)$ with $\det(E)=L$ and trace-free $\phi\in \Hom(E,E\otimes K_{\sS})_0$ admits a symmetric obstruction theory 
$$R\cHom_{p_{\sX}}(\rE, \rE)_{\perp}[1]\Tt^{-1}\longrightarrow \ll_{\N^{\perp}_{L}}.$$

\begin{defn}\label{defn_SU_VW_invariants}
Let $\sS$ be a smooth projective surface DM stack.  Fix a generating sheaf $\Xi$ for $\sS$, and a Hilbert polynomial $H$ associated with $\Xi$.  Let $\N^{\perp}_{L}:=\N_{L}^{\perp, H}$ be the moduli space of stable Higgs sheaves with Hilbert polynomial $H$.  Then define
$$\VW_{H}(\sS):=\int_{[(\N^{\perp}_{L})^{\cc^*}]^{\vir}}\frac{1}{e(N^{\vir})}.$$
\end{defn}

Since we work on surface DM stack $\sS$, it maybe better to fix the K-group class $\mathbf{c}\in K_0(\sS)$ such that the Hilbert polynomial of $\mathbf{c}$ is $H$.  Then 
$\VW_{\mathbf{c}}(\sS)=\int_{[(\N^{\perp}_{L})^{\cc^*}]^{\vir}}\frac{1}{e(N^{\vir})}$ is Vafa-Witten invariant corresponding to $\mathbf{c}$.

\subsection{$\cc^*$-fixed loci}\label{subsec_CStar_fixed_locus}

We discuss the $\cc^*$-fixed loci for the moduli space $\N^{\perp}_{L}$.

\subsubsection{Case I-Instanton Branch:}\label{subsubsec_first_type}
For the Higgs pairs $(E,\phi)$ such that $\phi=0$, the $\cc^*$-fixed locus is exactly the moduli space $\rM_{L}$ of Gieseker stable sheaves on $\sS$ with fixed determinant $L$ and with Hilbert polynomial $H$ associated with the generating sheaf $\Xi$.  The exact triangle in (\ref{eqn_deformation2}) splits the obstruction theory
$$R\cHom_{p_{\sX}}(\rE, \rE)_{\perp}[1]\Tt^{-1}\cong R\cHom_{p_{\sS}}(\E, \E\otimes K_{\sS})_{0}[1]\oplus 
R\cHom_{p_{\sS}}(\E, \E)_{0}[2]\Tt^{-1}$$
where $\Tt^{-1}$ represents the moving part of the $\cc^*$-action. Then the $\cc^*$-action induces a perfect obstruction theory 
$$E_{\rM}^{\bullet}:=R\cHom_{p_{\sS}}(\E, \E\otimes K_{\sS})_{0}[1]\to \ll_{\rM_{L}}.$$
The virtual normal bundle 
$$N^{\vir}=R\cHom_{p_{\sS}}(\E, \E\otimes K_{\sS})_{0}\Tt=E_{\rM}^{\bullet}\otimes \Tt[-1].$$
So the invariant contributed from $\rM_{L}$ (we can let $E_{\rM}^{\bullet}$ is quasi-isomorphic to $E^{-1}\to E^0$) is: 
\begin{align*}
\int_{[\rM_{L}]^{\vir}}\frac{1}{e(N^{\vir})}&=\int_{[\rM_{L}]^{\vir}}\frac{c_s^{\cc^*}(E^0\otimes \Tt)}{c_r^{\cc^*}(E^{-1}\otimes \Tt)}\\
&=\int_{[\rM_{L}]^{\vir}}\frac{c_s(E^0)+\Tt c_{s-1}(E^0)+\cdots}{c_r(E^{-1})+\Tt c_{r-1}(E^{-1})+\cdots}
\end{align*}
Here we assume $r$ and $s$ are the ranks of $E^{-1}$ and $E^0$ respectively, and $r-s$ is the virtual dimension of 
$\rM_{L}:=\rM_{L,H}$.  By the virtual dimension consideration, only $\Tt^0$ coefficient contributes and we may let $\Tt=1$, so
\begin{align}\label{eqn_virtual_Euler_number}
\int_{[\rM_{L}]^{\vir}}\frac{1}{e(N^{\vir})}&=\int_{[\rM_{L}]^{\vir}}\Big[\frac{c_{\bullet}(E^0)}{c_{\bullet}(E^{-1})}\Big]_{\vd}\nonumber \\
&=\int_{[\rM_{L}]^{\vir}}c_{\vd}(E_{\rM}^{\bullet})\in \zz.
\end{align}
This is the signed virtual Euler number of Ciocan-Fontanine-Kapranov/Fantechi-G\"ottsche. 
We have the following result: 
\begin{prop}\label{prop_K_S_fixed_locus}(\cite[Proposition 3.6]{JP})
Let us fix a generating sheaf $\Xi$ on $\sS$.  If $\deg K_{\sS}\leq 0$, then any stable $\cc^*$-fixed Higgs pair 
$(E,\phi)$ has Higgs field $\phi=0$. Therefore if we fix some $K$-group class $\mathbf{c}\in K_0(\sS)$, then 
$\VW^{L}_{\mathbf{c}}(\sS)$ is the same as the signed virtual Euler number in (\ref{eqn_virtual_Euler_number}).
\end{prop}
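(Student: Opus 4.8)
The plan is to show that any stable $\cc^*$-fixed Higgs pair $(E,\phi)$ on $\sS$ necessarily has $\phi = 0$ when $\deg K_{\sS} \leq 0$; the claimed identification with the signed virtual Euler number then follows immediately from the discussion in \S\ref{subsubsec_first_type}, since in that case the entire $\cc^*$-fixed locus $(\N^{\perp}_L)^{\cc^*}$ coincides with the instanton branch $\rM_L$, whose contribution was computed to be $\int_{[\rM_L]^{\vir}}c_{\vd}(E_{\rM}^\bullet)$ in \eqref{eqn_virtual_Euler_number}. So the whole content is the vanishing $\phi = 0$.

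First I would recall the structure of $\cc^*$-fixed Higgs pairs: if $(E,\phi)$ is fixed by the scaling action on the fibres of $\sX = \mathrm{Tot}(K_{\sS})$, then $E$ carries a $\cc^*$-equivariant structure, hence a weight decomposition $E = \bigoplus_{i} E_i$ into eigen-subsheaves, and $\phi$ has weight $1$, so $\phi$ maps $E_i \to E_{i+1}\otimes K_{\sS}$. The key numerical input is a slope (or reduced Hilbert polynomial) inequality: because $\deg K_{\sS} \leq 0$, tensoring by $K_{\sS}$ does not increase the slope $\mu_{\Xi}$ with respect to the fixed polarization $(\Xi, \sO_S(1))$ — here one needs that $\mu_{\Xi}(F \otimes K_{\sS}) \leq \mu_{\Xi}(F)$, which should come from the fact that $\deg_{\Xi}$ is additive and $c_1(K_{\sS})\cdot \sO_S(1) = \deg K_{\sS} \leq 0$ after passing to the coarse space via $F_{\Xi}$. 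Next I would run the standard Higgs-stability argument of Tanaka--Thomas adapted to the stacky setting: assume $\phi \neq 0$, pick the smallest $i$ with $E_i \neq 0$ and the largest $j$ with $E_j \neq 0$; the subsheaf $\bigoplus_{k > i} E_k$ (or a suitable $\phi$-invariant piece) destabilizes, or dually, comparing the slope of the image of $\phi$ with that of $E$ forces $\mu_{\Xi}(E) < \mu_{\Xi}(E\otimes K_{\sS}) \leq \mu_{\Xi}(E)$ via the $\phi$-invariance and stability, a contradiction; in the boundary case $\deg K_{\sS} = 0$ one gets equality of slopes and must push to the reduced Hilbert polynomial $h_{\Xi}$ to extract a strict inequality from stability. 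This is essentially \cite[Proposition 3.6]{JP}, which I would cite for the details.

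The main obstacle I anticipate is making the slope comparison genuinely work on the DM stack rather than on a scheme: the degree appearing in $\mu_{\Xi}$ is the $\Xi$-modified degree computed through the functor $F_{\Xi}(F) = p_*\cHom_{\sO_\sS}(\Xi, F)$ on the coarse moduli space $S$, and one must verify that $F_{\Xi}(F\otimes K_{\sS})$ has the expected Hilbert polynomial shift — i.e.\ that the generating-sheaf twist interacts correctly with tensoring by the canonical bundle, so that the sign of $\deg K_{\sS}$ still controls the sign of the slope change. Once that compatibility is in hand (it follows from the projection formula and the exactness of $F_{\Xi}$, together with $K_{\sS} = p^*K_S \otimes (\text{effective ramification data})$ or the direct computation of $\alpha_{\Xi, d-1}(F\otimes K_{\sS})$), the destabilization argument is formal. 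A secondary, more bookkeeping-level point is handling the equality case $\deg K_{\sS} = 0$ cleanly, where torsion-free sheaves $E$ that are strictly $\mu$-semistable but Gieseker stable require the finer $h_{\Xi}$ inequality; here I would invoke that $(E,\phi)$ is assumed \emph{Gieseker} stable as a Higgs pair, so a $\phi$-invariant subsheaf of the same slope still violates the strict reduced-Hilbert-polynomial inequality. With $\phi = 0$ established, $(\N^{\perp}_L)^{\cc^*} = \rM_L$ and the stated formula is exactly \eqref{eqn_virtual_Euler_number}.
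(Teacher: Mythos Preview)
The paper does not actually prove this proposition --- it simply records the statement with a citation to \cite[Proposition~3.6]{JP} and moves on --- so there is no in-paper argument to compare against. Your sketch is the standard Tanaka--Thomas destabilization argument (weight decomposition of a $\cc^*$-fixed pair, then a $\phi$-invariant subsheaf of slope at least $\mu_{\Xi}(E)$ forced by $\deg K_{\sS}\le 0$) and is correct; it is exactly what one expects the cited reference to do. One cosmetic point to align with the surrounding text: in the paper's convention (\S\ref{subsec_second_fixed_loci}) the Higgs field \emph{decreases} weights, $\phi_i\colon E_i\to E_{i-1}$, rather than increasing them as you wrote --- this is only a sign convention and does not affect your argument.
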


Also we have:

\begin{prop}
If $\deg K_{\sS}< 0$, then any semistable $\cc^*$-fixed Higgs pair $(E,\phi)$ has Higgs field $\phi=0$. 
\end{prop}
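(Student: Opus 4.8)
The plan is to imitate the argument for Proposition~\ref{prop_K_S_fixed_locus}, tracking the weaker hypothesis $\deg K_{\sS}<0$ and relaxing ``stable'' to ``semistable.'' First I would unpack what a $\cc^*$-fixed Higgs pair $(E,\phi)$ looks like: the $\cc^*$-action on $\sX=\mathrm{Tot}(K_{\sS})$ scales the fibres with weight one, so a fixed point in $\N^{\perp}_L$ is a Higgs sheaf $(E,\phi)$ admitting a $\cc^*$-equivariant structure, which by the standard argument (cf.\ \cite[\S 3]{TT1}) means $E$ decomposes as a finite direct sum $E=\bigoplus_{i} E_i$ graded by the $\cc^*$-weight, and $\phi\colon E\to E\otimes K_{\sS}$ shifts the grading by one, i.e.\ $\phi(E_i)\subset E_{i+1}\otimes K_{\sS}$. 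The point of the proof is to force this grading to be concentrated in a single degree, so that $\phi=0$.

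The key step is a slope (or reduced-Hilbert-polynomial) inequality. Suppose $\phi\neq 0$; then there are at least two nonzero graded pieces. Let $E_{\min}$ (resp.\ $E_{\max}$) be the nonzero piece of smallest (resp.\ largest) weight. The subsheaf $\bigoplus_{i\geq i_0+1}E_i$ is $\phi$-invariant for any cutoff, so semistability applied to the smallest such nontrivial invariant subsheaf and to its complementary quotient gives $\mu_{\Xi}$ (equivalently $h_{\Xi}$) constraints; meanwhile $\phi$ restricted to the top nonzero graded piece $E_{\max}$ lands in $E_{\max}\otimes K_{\sS}$ but its image lies in a \emph{higher} weight piece, hence is zero, so $\phi|_{E_{\max}}=0$ — wait, more precisely, I would use that $\phi$ induces a nonzero $\sO_{\sS}$-linear map from some $E_i$ to $E_{i+1}\otimes K_{\sS}$. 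Comparing slopes: a nonzero map $E_i\to E_{i+1}\otimes K_{\sS}$ between (semistable pieces of) the same Gieseker class forces $\mu_{\Xi}(E_i)\le \mu_{\Xi}(E_{i+1}\otimes K_{\sS}) = \mu_{\Xi}(E_{i+1}) + \mu_\Xi(K_{\sS})\cdot(\text{const})$, and since $\deg K_{\sS}<0$ the twist by $K_{\sS}$ \emph{strictly} decreases slope. Chaining these inequalities around the (finite) set of occupied weights and combining with the reverse inequalities coming from $\phi$-invariance of the tails produces $\mu_{\Xi}(E_{\min})<\mu_{\Xi}(E_{\min})$ (strictly, because the $K_{\sS}$-twist contributes a strict drop and there is at least one such twist), a contradiction. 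Hence all graded pieces but one vanish and $\phi=0$.

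I would want to be careful about two technical points. Definition~\ref{defn_slope_stability} measures the slope via $\mu_\Xi(F)=\alpha_{\Xi,d-1}(F)/\alpha_{\Xi,d}(F)$ for $d$-dimensional sheaves, and here $E$ is torsion free on the surface $\sS$ (so $d=2$); I need that tensoring a rank-$\rho$ torsion free sheaf by the line bundle $K_{\sS}$ changes $\alpha_{\Xi,1}$ by $\rho\cdot(\text{something proportional to }\deg K_{\sS})$ with the right sign — this is the modified-Hilbert-polynomial analogue of the usual formula $\mu(F\otimes L)=\mu(F)+\deg L$ on surfaces, and it follows from additivity of $H_\Xi$ and the Riemann--Roch computation underlying Nironi's construction \cite{Nironi}, together with the fact that ``$\deg K_{\sS}$'' is computed against the polarization $\sO_S(1)$ on the coarse space via $p$. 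The second point is that on a DM stack a nonzero morphism between semistable sheaves of equal reduced Hilbert polynomial still forces the slope inequality in the expected direction; this is standard and already implicitly used in \cite{Nironi}, \cite{JP}.

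The main obstacle I expect is precisely this bookkeeping with the \emph{modified} slope and the sign of the $K_{\sS}$-twist on a stack: one must verify that $\deg K_{\sS}<0$ (a condition on the coarse space together with the ramification/gerbe data) really does translate into a strict decrease of $\mu_\Xi$ under $-\otimes K_{\sS}$, uniformly in the rank. Once that single lemma is in hand, the weight-grading contradiction is formal and the passage from ``stable'' to ``semistable'' costs nothing beyond using non-strict inequalities in the intermediate steps and extracting the one strict inequality from the (nonzero) $K_{\sS}$-twist.
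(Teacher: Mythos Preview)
Your approach is essentially the same as the paper's; the paper's entire proof is the one-line ``This is the same as Proposition~\ref{prop_K_S_fixed_locus},'' so you have in fact written out more than the paper does. The weight decomposition $E=\bigoplus_i E_i$ of a $\cc^*$-fixed Higgs pair and the use of $\phi$-invariant graded tails to extract slope inequalities, with the strict drop coming from $\deg K_{\sS}<0$, is exactly the intended argument (and is the argument in \cite[\S 3]{TT1}).

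One genuine slip to flag: you invoke ``a nonzero map $E_i\to E_{i+1}\otimes K_{\sS}$ between (semistable pieces of) the same Gieseker class forces $\mu_\Xi(E_i)\le\mu_\Xi(E_{i+1}\otimes K_{\sS})$.'' The individual weight pieces $E_i$ are \emph{not} known to be semistable, so this step as written is unjustified. The argument should run entirely through $\phi$-invariant subsheaves of $E$ itself: with the paper's convention $\phi$ \emph{decreases} weights, so for each cutoff $k$ the tail $\bigoplus_{i\le k}E_i$ is $\phi$-invariant. Semistability of $(E,\phi)$ then gives $\mu_\Xi\bigl(\bigoplus_{i\le k}E_i\bigr)\le\mu_\Xi(E)\le\mu_\Xi\bigl(\bigoplus_{i>k}E_i\bigr)$ for every $k$. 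Equivalently, one may use that $\ker\phi$ (which contains the lowest-weight piece, hence is nonzero) and $(\im\phi)\otimes K_{\sS}^{-1}\subset E$ (which is a proper subsheaf since $\phi$ is nilpotent on a $\cc^*$-fixed pair) are both $\phi$-invariant; combining the two resulting inequalities with $\mu_\Xi(F\otimes K_{\sS})=\mu_\Xi(F)+\mu_\Xi(K_{\sS})$ yields $\mu_\Xi(K_{\sS})\ge 0$, contradicting $\deg K_{\sS}<0$. Your concern about the modified slope behaving correctly under twist by a line bundle is well placed but routine: it follows from the definition of $H_\Xi$ and Riemann--Roch on $\sS$ as in \cite{Nironi}, and is already implicit in \cite{JP}. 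Once you replace the ``semistable pieces'' step by the tail argument, your sketch is complete.
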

\begin{proof}
This is the same as Proposition \ref{prop_K_S_fixed_locus}.
\end{proof}

\subsubsection{Case II-Monopole Branch:}\label{subsec_second_fixed_loci}
The second component $\rM^{(2)}$ corresponds to the Higgs fields $\phi\neq 0$. Let $(E,\phi)$ be a $\cc^*$-fixed stable Higgs pair.  Since the $\cc^*$-fixed stable sheaves $\sE_{\phi}$ are simple,  we  use \cite[Proposition 4.4]{Kool}, \cite{GJK} to make this stable sheaf 
$\cc^*$-equivariant.  The cocycle condition in the  $\cc^*$-equivariant definition for the Higgs pair $(E,\phi)$ corresponds to a $\cc^*$-action 
$$\psi: \cc^*\to \Aut(E)$$
such that 
\begin{equation}\label{eqn_cocycle_condition}
\psi_{t}\circ \phi\circ \psi_t^{-1}=t\phi
\end{equation}  
With respect to the $\cc^*$-action on $E$,  it splits into a direct sum of eigenvalue subsheaves
$$E=\oplus_{i}E_i$$
where $E_i$ is the weight space such that $t$ has by $t^i$, i.e., $\psi_t=\mbox{diag}(t^i)$. 
The action acts on the Higgs field with weight one by (\ref{eqn_cocycle_condition}).
Also for a Higgs pair $(E,\phi)$, if a $\cc^*$-action on $E$ induces weight one action on $\phi$, then it is a fixed point of the $\cc^*$-action.

Since the $\cc^*$-action on the canonical line bundle $K_{\sS}$ has weight $-1$, $\phi$ decreases the weights, and it maps the lowest weight torsion subsheaf to zero, hence zero by stability. 
So each $E_i$ is torsion free and have rank $> 0$.  Thus $\phi$ acts blockwise through morphisms
$$\phi_i: E_i\to E_{i-1}.$$
These are flags of torsion-free sheaves on $\sS$, see \cite{TT1}. 

In the case that $E_i$ has rank $1$, they are twisted by line bundles, and $\phi_i$ defining nesting of ideals. Then this is the nested Hilbert scheme on $\sS$. 
Also it is interesting to see when there exist rank $1$ torsion free sheaves on a surface DM stacks. 
Very little is known of nested Hilbert schemes for surface DM stacks.

\section{Calculation results}\label{sec_calculations}

We survey some  calculation results  on two type of  general type surface DM stacks, one is for a $r$-root stack  over a smooth quintic surface, and the other is for quintic surface with ADE singularities.  

\subsection{Root stack on quintic surfaces}\label{subsec_root_stacks}

Let $S\subset \pp^3$ be a smooth quintic surface in $\pp^3=\proj(\cc[x_0:x_1:x_2,x_3])$, given by a homogeneous degree $5$ polynomial.  Let $C\subseteq |K_{S}|$ be a smooth connected canonical divisor such that 
$\sO_S$ is the only line bundle $L$ satisfying $0\leq \deg L\leq \frac{1}{2}\deg K_{S}$ where the degree is defined  by 
$\deg L=c_1(L)\cdot c_1(\sO_S(1))$.  Then we have the following topological invariants:
\begin{equation}\label{eqn_topology_invariants_quintic}
\begin{cases}
g_{C}=1+c_1(S)^2=1+5=6;\\
h^0(K_{S})=p_{g}(S)=\frac{1}{12}(c_1(S)^2+c_2(S))-1=\frac{1}{12}(5+55)-1=4;\\
h^0(K_S^2)=p_g(S)+g_C=10.
\end{cases}
\end{equation}
Let 
$\sS:=\sqrt[r]{(S,C)}$
be the root stack associated with the divisor $C$.  One can take $\sS=\sqrt[r]{(S,C)}$ as the $r$-th root stack associated with the line bundle $\sO_S(C)$.  Let 
$p: \sS\to S$
be the projection to its coarse moduli space $S$, and let 
$$\sC:=p^{-1}(C).$$
We still use $p: \sC\to C$ to represent the projection and it is a $\mu_r$-gerbe over $C$. 
The canonical line bundle $K_{\sS}$ satisfies the formula 
$$K_{\sS}=p^*K_{S}+\frac{r-1}{r}\sO_{\sS}(\sC)=\sO_{\sS}(\sC).$$

Recall that $\sX=\mbox{Tot}(K_{\sS})$, and $X:=\mbox{Tot}(K_{S})$, and let 
$$\pi: \sX\to \sS; \quad  \pi: X\to S$$
be the projection. 
We pick the generating sheaf ``$\Xi=\oplus_{i=0}^{r}\sO_{\sS}(i\sC^{\frac{1}{r}})$", and a Hilbert polynomial $H$, and let 
$\N_H$ be the moduli space of stable Higgs sheaves on $\sS$ with Hilbert polynomial $H$. 

The $\cc^*$ acts on $\sX$ by scaling the fibres of $\sX\to \sS$.  Let $(E,\phi)$ be a $\cc^*$-fixed rank $2$ Higgs pair with fixed determinant $L=K_{\sS}$ in the second component $\rM^{(2)}$ in \S \ref{subsec_second_fixed_loci}.   
Then since all the $E_i$ have rank bigger than zero, 
$$E=E_i\oplus E_j.$$
Without loss of generality, we may let 
$E=E_0\oplus E_{-1}$ since tensoring $E$ by $\Tt^{-i}$ $E_i$ goes to $E_0$, where $\Tt$ is the standard one dimensional $\cc^*$-representation of weight one. 
Then considering $\phi$ as a weight zero element of $\Hom(E,E\otimes K_{\sS})\otimes \Tt$, we have 
$$E=E_0\oplus E_{-1}, \text{~and~} \phi= \left(
\begin{array}{cc}
0&0\\
\iota&0
\end{array}
\right)$$
for some 
$\iota: E_0\to E_{-1}\otimes K_{\sS}\otimes \Tt$. Then $E_{-1}\hookrightarrow E$ is a $\phi$-invariant subsheaf, and by semistability (Gieseker stable implies $\mu$-semistable)
we have 
$$\mu_{\Xi}(E_{-1})\leq \mu_{\Xi}(E_0)=\mu_{\Xi}(K_{\sS})-\mu_{\Xi}(E_{-1}).$$
The existence of the nonzero map $\Phi: E_0\to E_{-1}\otimes K_{\sS}$ implies:
$$\mu_{\Xi}(E_{-1})+\mu_{\Xi}(K_{\sS})\geq \mu_{\Xi}(E_0)=\mu_{\Xi}(K_{\sS})-\mu_{\Xi}(E_{-1}).$$
So 
\begin{equation}\label{eqn_inequality}
0\leq \mu_{\Xi}(E_{-1})\leq \frac{1}{2}\mu_{\Xi}(K_{\sS}).
\end{equation}

\begin{lem}\label{lem_E0E1}(\cite[Lemma 4.2]{JP})
The inequality (\ref{eqn_inequality}) implies that 
$$\det(E_{-1})=\sO_{\sS}; \text{~and~} \det(E_{0})=K_{\sS}.$$
\end{lem}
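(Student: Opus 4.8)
\emph{Proof plan.} The plan is to turn the slope bound $(\ref{eqn_inequality})$ into an inequality on degrees of line bundles on $S$ and then invoke the hypothesis that $\sO_S$ is the only line bundle of degree in $[0,\frac12\deg_S K_S]$. First I would reduce to determinants. Being the two weight spaces of a $\cc^*$-fixed rank-two Higgs pair with both ranks positive (cf.\ \S\ref{subsec_second_fixed_loci}), $E_0$ and $E_{-1}$ are rank-one torsion-free sheaves on $\sS$, so $\det E_0$ and $\det E_{-1}$ are line bundles; since $\det E=K_{\sS}$ we get $\det E_0=K_{\sS}\otimes(\det E_{-1})^{-1}$, and it suffices to prove $\det E_{-1}=\sO_{\sS}$. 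By Riemann--Roch the coefficients $\alpha_{\Xi,2},\alpha_{\Xi,1}$ of the modified Hilbert polynomial of a rank-one sheaf, hence $\mu_{\Xi}$, depend only on its determinant; with the normalisation already used above (so that on line bundles $\mu_{\Xi}$ is additive, $\mu_{\Xi}(\sO_{\sS})=0$ and $\mu_{\Xi}(E_0)=\mu_{\Xi}(K_{\sS})-\mu_{\Xi}(E_{-1})$), only $\mu_{\Xi}(\det E_{-1})$ matters.

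Next I would unwind $\Pic(\sS)$. Because $\sO_{\sS}(\sC)=p^*\sO_S(C)=K_{\sS}$, every line bundle on $\sS$ is uniquely of the form $p^*A\otimes\sO_{\sS}(\frac{k}{r}\sC)$ with $A\in\Pic(S)$ and $0\le k\le r-1$. Applying $F_{\Xi}$ and using $p_*\sO_{\sS}(\frac{j}{r}\sC)=\sO_S(\lfloor j/r\rfloor C)$ one computes, for a fixed constant $\lambda>0$, that $\mu_{\Xi}(p^*A)=\lambda\deg_S A$ and $\mu_{\Xi}(K_{\sS})=\lambda\,\deg_S K_S$, while $\mu_{\Xi}(\sO_{\sS}(\frac{1}{r}\sC))=\frac{1}{r}\mu_{\Xi}(K_{\sS})$ by additivity; hence
$$\mu_{\Xi}(\det E_{-1})=\lambda\Bigl(\deg_S A+\tfrac{k}{r}\deg_S K_S\Bigr),\qquad \mu_{\Xi}(K_{\sS})=\lambda\,\deg_S K_S .$$
Feeding this into $(\ref{eqn_inequality})$ and cancelling $\lambda$ yields the clean inequality
$$0\ \le\ \deg_S A+\tfrac{k}{r}\deg_S K_S\ \le\ \tfrac12\deg_S K_S .$$

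Now I would extract $A=\sO_S$. Since $0\le\frac{k}{r}<1$, the right inequality gives $\deg_S A\le\frac12\deg_S K_S$ and the left gives $\deg_S A>-\deg_S K_S$. Applying the hypothesis on $C$ to $A^{-1}$ rules out $\deg_S A\in(-\frac12\deg_S K_S,0)$ (else $\deg_S A^{-1}\in(0,\frac12\deg_S K_S)$), and applying it to $K_S\otimes A$ rules out $\deg_S A\in(-\deg_S K_S,-\frac12\deg_S K_S]$ (else $\deg_S(K_S\otimes A)\in(0,\frac12\deg_S K_S]$); in both cases we would have a line bundle of degree in $[0,\frac12\deg_S K_S]$ other than $\sO_S$. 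Hence $\deg_S A\in[0,\frac12\deg_S K_S]$, so $A=\sO_S$ by hypothesis. It then remains to force $k=0$: with $A=\sO_S$ the inequality only gives $0\le k\le r/2$, and I would rule out $0<k\le r/2$ by pairing this with the complementary inequality coming from the quotient $E_0\cong E/E_{-1}$ of the Gieseker-stable $E$ together with the integrality of $\det E=K_{\sS}$ (which forces $\det E_0$ and $\det E_{-1}$ to be complementary fractional twists), yielding $\det E_{-1}=\sO_{\sS}$ and $\det E_0=K_{\sS}$.

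The part that is genuinely routine is the $A$-part of Step~3: it is a direct consequence of the chosen geometry of $(S,C)$. The main obstacle is the last step --- excluding a nontrivial root-stack twist $\sO_{\sS}(\frac{k}{r}\sC)$ from $\det E_{-1}$ --- and, to a lesser extent, getting the bookkeeping of the generating sheaf $\Xi$ right so that the slope really takes the form $\lambda(\deg_S A+\frac{k}{r}\deg_S K_S)$ with $\lambda>0$; everything else then follows formally.
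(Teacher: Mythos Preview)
The paper does not actually prove this lemma; it only records the statement and cites \cite[Lemma 4.2]{JP}. So there is no proof here to compare against, and I can only assess your argument on its own terms.

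Your reduction to $\det E_{-1}$ and the translation of $(\ref{eqn_inequality})$ into a degree inequality on $S$ is the right shape, and the argument forcing the ``coarse part'' $A\in\Pic(S)$ to be trivial is fine. The gap is exactly where you flag it: excluding a nontrivial root twist $\sN^k$ ($0<k\le r-1$) from $\det E_{-1}$. Your proposed fix does not work. First, the ``complementary inequality'' for the quotient $E_0=E/E_{-1}$ gives nothing new: with $\det E_0\otimes\det E_{-1}=K_{\sS}$ the quotient bound is equivalent to the subsheaf bound you already used, and note that $E_0\subset E$ is \emph{not} $\phi$-invariant (since $\iota\neq0$), so you cannot feed it back into Higgs semistability as a second subobject. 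Second, the appeal to ``integrality of $\det E=K_{\sS}$'' is empty: on the root stack $K_{\sS}=p^*K_S\otimes\sN^{r-1}$ already carries a nontrivial fractional part, so requiring $\det E_0$ and $\det E_{-1}$ to be ``complementary fractional twists'' imposes no constraint on $k$.

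There is also a bookkeeping slip that feeds into this: your identity $\sO_{\sS}(\sC)=p^*\sO_S(C)=K_{\sS}$ is not correct for $r>1$ (one has $K_{\sS}=p^*K_S\otimes\sN^{r-1}$), so $\mu_{\Xi}(K_{\sS})\neq\lambda\deg_S K_S$ in general. With the correct value the upper bound in your displayed inequality loosens to roughly $\tfrac{2r-1}{2r}\deg_S K_S$, which by itself leaves $k$ ranging over all of $\{0,\dots,r-1\}$. In short, the hypothesis ``$\sO_S$ is the only line bundle with $0\le\deg L\le\tfrac12\deg K_S$'' lives on $S$ and cannot see the $\zz/r$-part of $\Pic(\sS)$; any complete proof must bring in extra input (from \cite{JP}) beyond the slope inequality to pin down the fractional twist, and your sketch does not supply it.
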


The lemma implies that 
$$E_0=\sI_0\otimes K_{\sS}, \quad  E_{-1}=\sI_1\otimes \Tt^{-1}$$
for some ideal sheaves $\sI_i$.  The morphism $\sI_0\to \sI_1$ is nonzero, so we must have:
$\sI_0\subseteq \sI_1$.
So there exist $\sZ_1\subseteq \sZ_0$ two zero-dimensional subsheaves parametrized by $\sI_0\subseteq \sI_1$. \\

\textbf{Components in terms of $K$-group class}\label{subsec_components_K-group}

Let $K_0(\sS)$ be the Grothendieck $K$-group of $\sS$, and we want to use Hilbert scheme on $\sS$ parametrized by 
$K$-group classes.  We fix the filtration
$$F_0K_0(\sS)\subset F_1K_0(\sS)\subset F_2K_0(\sS)$$
where $F_iK_0(\sS)$ is the subgroup of $K_0(\sS)$ such that the support of the elements in $F_iK_0(\sS)$ has dimension $\leq i$. 
The orbifold Chern character morphism is defined by:
\begin{equation}\label{eqn_orbifold_Chern_character}
\widetilde{\Ch}: K_0(\sS)\to H^*_{\CR}(\sS,\qq)=H^*(I\sS, \qq)
\end{equation}
where $H^*_{\CR}(\sS,\qq)$ is the Chen-Ruan cohomology of $\sS$.  The inertia stack 
$$I\sS=\sS\bigsqcup \sqcup_{i=1}^{r-1}\sC_i$$
where each $\sC_i=\sC$ is the stacky divisor of $\sS$.   We should understand that the inertia stack 
is indexed by the element $g\in\mu_r$, $\sS_{g}\cong \sC$ is the component corresponding to $g$.  It is clear that 
$\sS_{1}=\sS$ and $\sS_{g}=\sC$ if $g\neq 1$. 
Let $\zeta\in \mu_r$ be the generator of $\mu_r$. 
Then 
$$H^*(I\sS, \qq)=H^*(\sS)\bigoplus \oplus_{i=1}^{r-1}H^*(\sC_i),$$
where $\sC_i$ corresponds to the element $\zeta^i$. 
The cohomology of $H^*(\sC_i)$ is isomorphic to $H^*(C)$. 
For any coherent sheaf $E$, the restriction of $E$ to every $\sC_i$ has a $\mu_r$-action. 
We assume that 
$$E=E_{\zeta^i}^1\oplus E_{\zeta^i}^2$$
is the decomposition of eigen-subsheaves such that it acts by 
$e^{2\pi i\frac{f_{i1}}{r}}$ on $E_{\zeta^i}^1$ and $e^{2\pi i\frac{f_{i2}}{r}}$ on $E_{\zeta^i}^2$. 
We  let 
\begin{equation}\label{eqn_Chern_character_value1}
\widetilde{\Ch}(E)=(\Ch(E), \oplus_{i=1}^{r-1}\Ch(E|_{\sC_i})),
\end{equation}
where 
$$\Ch(E)=(\rk(E), c_1(E), c_2(E))\in H^*(\sS),$$
and 
$$
\Ch(E|_{\sC_i})=\left(e^{2\pi i\frac{f_{i1}}{r}}+e^{2\pi i\frac{f_{i2}}{r}}, e^{2\pi i\frac{f_{i1}}{r}}c_1(E_{\zeta^i}^1)
+e^{2\pi i\frac{f_{i2}}{r}}c_1(E_{\zeta^i}^2)\right)\in H^*(\sC_i).$$

In order to write down the generating function later.  We introduce some notations.  We roughly write 
$$\widetilde{\Ch}(E)=(\widetilde{\Ch}_{g}(E))$$
where $\widetilde{\Ch}_{g}(E)$ is the component in $H^*(\sS_{g})$ as in (\ref{eqn_Chern_character_value1}). 
Then define:
\begin{equation}\label{eqn_Chern_character_degree}
\left(\widetilde{\Ch}_{g}\right)^k:=\left(\widetilde{\Ch}_{g}\right)_{\dim \sS_{g}-k}\in H^{\dim \sS_{g}-k}(\sS_{g}).
\end{equation}
The $k$ is called the codegree in \cite{GJK}.  In our inertia stack $\sS_g$ is either the whole $\sS$, or $\sC$,
therefore if we have a rank $2$ $\cc^*$-fixed Higgs pair $(E,\phi)$ with fixed $c_1(E)=-c_1(\sS)$, then 
$\left(\widetilde{\Ch}_{g}\right)^2(E)=2$, the rank; while 
$$
\left(\widetilde{\Ch}_{g}\right)^1(E)=
\begin{cases}
-c_1(\sS), & g=1;\\
 e^{2\pi i\frac{f_{i1}}{r}}+ e^{2\pi i\frac{f_{i2}}{r}}, & g=\zeta^i \neq 1.
\end{cases}
$$
Also we have 
$$
\left(\widetilde{\Ch}_{g}\right)^0(E)=
\begin{cases}
c_2(E), & g=1;\\
e^{2\pi i\frac{f_{i1}}{r}}c_1(E_{\zeta^i}^1)
+e^{2\pi i\frac{f_{i2}}{r}}c_1(E_{\zeta^i}^2), & g=\zeta^i \neq 1.
\end{cases}
$$

Therefore we have the following proposition:
\begin{prop}\label{prop_second_fixed_loci_Hilbert_scheme}(\cite[Proposition 4.3]{JP})
In the case that the rank of stable Higgs sheaves is $2$, we fix a $K$-group class 
$\mathbf{c}\in K_0(\sS)$ such that $\left(\widetilde{\Ch}_{1}\right)^1(\mathbf{c})=-c_1(\sS)$.  Then 
\begin{enumerate}
\item If $c_2(E)<0$, then the $\cc^*$-fixed locus is empty by the assumption of Bogomolov inequality.  
\item If $c_2(E)\geq 0$,  then 
$$\rM^{(2)}\cong \bigsqcup_{\alpha\in F_0K_0(\sS)}\Hilb^{\alpha, \mathbf{c}_0-\alpha}(\sS)$$
where $\mathbf{c_0}\in F_0K_0(\sS)$ such that $\left(\widetilde{\Ch}_{g}\right)^0(\mathbf{c}_0)=\left(\widetilde{\Ch}_{g}\right)^0(\mathbf{c})$; and $\Hilb^{\alpha, \mathbf{c}_0-\alpha}(\sS)$ is the nested Hilbert scheme of zero-dimensional substacks of $\sS$:
$$\sZ_1\subseteq \sZ_0$$
such that $[\sZ_1]=\alpha$, $[\sZ_0]=\mathbf{c}_0-\alpha$.  $\square$
\end{enumerate}
\end{prop}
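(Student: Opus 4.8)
The plan is to unwind the description of the $\cc^*$-fixed locus in the monopole branch from \S\ref{subsec_second_fixed_loci} together with Lemma~\ref{lem_E0E1}, and then repackage the resulting data as a nested Hilbert scheme indexed by $K$-group classes. First I would recall from the preceding discussion that a $\cc^*$-fixed stable Higgs pair $(E,\phi)$ with $\phi\neq 0$ in rank $2$ splits as $E=E_0\oplus E_{-1}$ with $\phi$ the single off-diagonal arrow $\iota\colon E_0\to E_{-1}\otimes K_{\sS}\otimes\Tt$, and that the slope inequality~(\ref{eqn_inequality}) together with the hypothesis on $(S,C)$ (only $\sO_S$ has degree between $0$ and $\tfrac12\deg K_S$) forces, via Lemma~\ref{lem_E0E1}, $\det(E_{-1})=\sO_{\sS}$ and $\det(E_0)=K_{\sS}$. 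Since $E_{-1}$ and $E_0$ are rank-one torsion-free sheaves on a smooth surface DM stack with these determinants, they are of the form $E_{-1}=\sI_1\otimes\Tt^{-1}$ and $E_0=\sI_0\otimes K_{\sS}$ for ideal sheaves $\sI_0,\sI_1$ of zero-dimensional substacks $\sZ_0,\sZ_1$, and the nonvanishing of $\iota$ (a map $\sI_0\to\sI_1$ up to the line bundle twist) forces the inclusion $\sI_0\subseteq\sI_1$, equivalently $\sZ_1\subseteq\sZ_0$.

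Next I would handle part (1): the Bogomolov inequality hypothesis (made explicit in the statement) says $c_2(E)\ge 0$ for the semistable $E$ under consideration, so when $c_2(E)<0$ there is simply no such fixed pair and $\rM^{(2)}=\varnothing$. For part (2), assuming $c_2(E)\ge 0$, I would set up the bijection between fixed pairs with the prescribed $K$-class $\mathbf c$ and pairs $(\sZ_1\subseteq\sZ_0)$ of zero-dimensional substacks. The key bookkeeping point is translating "fixed $c_2(E)$ and fixed restrictions along the stacky divisors $\sC_i$" into the statement "$[\sZ_1]+[\sZ_0]=\mathbf c_0$" for the appropriate class $\mathbf c_0\in F_0K_0(\sS)$. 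Here I would use that the $K$-class of $E=E_0\oplus E_{-1}$ determines, and is determined by (given the fixed determinant $L=K_{\sS}$ and $\bigl(\widetilde{\Ch}_1\bigr)^1(\mathbf c)=-c_1(\sS)$), the codegree-$0$ parts $\bigl(\widetilde{\Ch}_g\bigr)^0$; since $E_0=\sI_0\otimes K_{\sS}$ and $E_{-1}=\sI_1\otimes\Tt^{-1}$, a short Chern-character computation (additivity of $\widetilde{\Ch}$ over the direct sum, and the fact that tensoring by a line bundle shifts the lower-order terms by the classes of $\sZ_0,\sZ_1$) shows that the datum $\bigl(\widetilde{\Ch}_g\bigr)^0(\mathbf c)$ is equivalent to fixing $[\sZ_0]+[\sZ_1]=\mathbf c_0$ in $F_0K_0(\sS)$. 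Writing $\alpha=[\sZ_1]$ then gives $[\sZ_0]=\mathbf c_0-\alpha$, and letting $\alpha$ range over $F_0K_0(\sS)$ produces the disjoint union $\bigsqcup_{\alpha}\Hilb^{\alpha,\,\mathbf c_0-\alpha}(\sS)$. Finally I would check this is an isomorphism of schemes, not merely a bijection on points, by noting that the construction $(\sZ_1\subseteq\sZ_0)\mapsto(E_0\oplus E_{-1},\iota)$ is functorial in families — the universal ideal sheaves on the nested Hilbert scheme, twisted appropriately, produce a family of $\cc^*$-fixed Higgs pairs, and conversely the splitting $E=E_0\oplus E_{-1}$ of a family of fixed pairs is canonical by the weight decomposition — so the two moduli functors agree.

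The main obstacle I expect is the $K$-theoretic bookkeeping in the orbifold setting: on a surface DM stack the class in $F_0K_0(\sS)$ of a zero-dimensional substack records not only a length but also representation-theoretic data along the stacky locus $\sC$ (the eigenvalue decomposition appearing in~(\ref{eqn_Chern_character_value1})), so one must verify carefully that "fix $\mathbf c$, equivalently fix $\det=K_{\sS}$ and the $\bigl(\widetilde{\Ch}_g\bigr)^0$" genuinely corresponds to fixing the sum $[\sZ_0]+[\sZ_1]$ and nothing more — in particular that no further constraint is imposed on the individual classes beyond $\alpha\in F_0K_0(\sS)$ and that the decomposition $E_0\oplus E_{-1}$ is compatible with the $\mu_r$-action along each $\sC_i$ in the expected way. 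The rest is a routine deformation-to-families argument once this dictionary is in place; I would rely on \cite[Proposition 4.4]{Kool} and \cite{GJK} (cited already in \S\ref{subsec_second_fixed_loci}) to make the $\cc^*$-equivariant structure on the fixed sheaves precise, exactly as in the scheme case treated in \cite{TT1}.
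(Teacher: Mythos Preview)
Your proposal is correct and follows essentially the same approach as the paper: the proposition is stated with a $\square$ and no separate proof, because the argument is precisely the preceding discussion in \S\ref{subsec_root_stacks} (the weight decomposition $E=E_0\oplus E_{-1}$, the slope bound (\ref{eqn_inequality}), Lemma~\ref{lem_E0E1}, and the resulting identification $E_0=\sI_0\otimes K_{\sS}$, $E_{-1}=\sI_1\otimes\Tt^{-1}$ with $\sI_0\subseteq\sI_1$), together with the $K$-theoretic indexing set up via the orbifold Chern character (\ref{eqn_orbifold_Chern_character})--(\ref{eqn_Chern_character_degree}). Your additional remarks on upgrading the pointwise bijection to an isomorphism of moduli functors are a reasonable elaboration that the paper leaves implicit.
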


Several cases are calculated.
\subsubsection{The case $\sZ_1=\emptyset$}\label{subsec_case_Z1_empty}

Therefore in this case 
$$E=\sI_0\otimes K_{\sS}\oplus \sO\cdot \Tt^{-1}.$$
So the nested Hilbert scheme $\Hilb^{\alpha, \mathbf{c}_0-\alpha}(\sS)$ is just the Hilbert scheme 
$\Hilb^{\mathbf{c}_0}(\sS)$ on $\sS$.   One can study the  deformation and obstruction theory in detail as in \cite[\S 4.1.3]{JP} to calculate that the virtual fundamental class is given by:
\begin{equation}\label{eqn_Euler_obstruction_bundle2}
[\rM^{(2)}]^{\vir}
=(-1)^{\rk}\sC^{[\mathbf{c}_0]}\subset  \Hilb^{\mathbf{c}_0}(\sS)=\rM^{(2)}.
\end{equation}

The calculation of the virtual normal bundle $N^{\vir}$  of $\rM^{(2)}$ is the same as in \cite[\S 8.3]{TT1}, which is given by the moving part of the obstruction theory:
$$\Gamma(K_{\sS}|_{\sZ_0})\Tt\oplus R\Gamma(\sI_0 K_{\sS}^2)\Tt^2\oplus R\Gamma(\sI_0 K^2_{\sS})^{\vee}\Tt^{-1}[-1]\oplus T_{\sZ_0}^*\Hilb^{\mathbf{c}_0}(\sS)\Tt[-1]$$
at $\sZ_0\in \rM^{(2)}$.  Then we calculate the virtual normal bundle $N^{\vir}$ by noting that
$$R\Gamma(\sI_0 K_{\sS}^2)=H^0(K_{\sS^2})-H^0(K_{\sS}|_{\sZ_0});$$
and $N^{\vir}$ is:
$$[K_{\sS}^{[\mathbf{c}_0]}]\Tt+(\Tt^2)^{\oplus\dim H^0(K_{\sS}^2)}
-[(K_{\sS}^2)^{[\mathbf{c}_0]}]\Tt^2-(\Tt^{-1})^{\oplus\dim H^0(K_{\sS}^2)}
+[((K_{\sS}^2)^{[\mathbf{c}_0]})^*]\Tt^{-1}-\Big[T_{\Hilb^{\mathbf{c}_0}(\sS)}\Big]\Tt.$$

Since $\sC\to C$ is a $\mu_r$-gerbe, we can just write the Hilbert scheme $\sC^{[\mathbf{c}_0]}$ as 
$\sC^{[n]}$ for some integer $n\in \zz_{\geq 0}$. 
So we calculate the virtual Euler class:
\begin{align*}
\frac{1}{e(N^{\vir})}&=\frac{e((K_{\sS})^{[n]})\Tt^2)\cdot e(\Tt^{-1})^{\oplus\dim H^0(K_{\sS}^2)}\cdot e(T^*_{\Hilb^{\mathbf{c}_0}(\sS)}\Tt)}
{e(K_{\sS}^{[n]}\Tt)\cdot e((\Tt^2)^{\oplus \dim H^0(K_{\sS}^2)})\cdot e(((K_{\sS}^2)^{[n]})^* \Tt^{-1})}\\
&=\frac{(2t)^n\cdot c_{\frac{1}{2t}}((K_{\sS}^2)^{[n]})\cdot (-t)^{\dim H^0(K_{\sS}^2)}\cdot t^{2n}\cdot c_{\frac{1}{t}}(T^*_{\Hilb^n(\sS)})}
{t^n\cdot c_{\frac{1}{t}}((K_{\sS})^{[n]})\cdot (2t)^{\dim H^0(K_{\sS}^2)}\cdot (-1)^n\cdot t^n c_{\frac{1}{t}}((K_{\sS}^2)^{[n]})}\\
&=(-2)^{n-\dim}\cdot t^n\cdot 
\frac{c_{\frac{1}{2t}}((K_{\sS}^2)^{[n]})\cdot c_{-\frac{1}{t}}(T_{\Hilb^n(\sS)})}
{c_{\frac{1}{t}}((K_{\sS})^{[n]})\cdot c_{\frac{1}{t}}((K_{\sS}^2)^{[n]})}
\end{align*}
where 
$$c_s(E):=1+sc_1(E)+\cdots+s^r c_r(E),$$
and when $s=1$, $c_s(E)$ is the total Chern class of $E$.  By the arguments of the degree, we calculate the case $t=1$. 
Also since $\sC^{[n]}$ is cut out of the section $s^{[n]}$ on $K_{\sS}^{[n]}$, we have 
$$T_{\Hilb^{\mathbf{c}_0}}(\sS)|_{\sC^{[n]}}=T_{\sC^{[n]}}\oplus K_{\sS}^{[n]}|_{\sC^{[n]}}$$
in $K$-theory. 
Therefore
\begin{equation}\label{eqn_rM_integral}
\int_{[\rM^{(2)}]^{\vir}}\frac{1}{e(N^{\vir})}=(-2)^{-\dim}\cdot 2^n\cdot 
\int_{[\sC^{[n]}]}\frac{c_{\frac{1}{2}}((K_{\sS}^2)^{[n]})\cdot c_{-1}(T_{\sC^{[n]}})\cdot c_{-1}(K_{\sS}^{[n]})}
{c_{\bullet}(K_{\sS}^{[n]})\cdot c_{\bullet}((K_{\sS}^2)^{[n]})}.
\end{equation}

For the  $\mu_r$-gerby curve $\sC\to C$, we have

\begin{prop}\label{prop_Hilbert_scheme_gerbe}(\cite[Proposition 4.6]{JP})
Let $\Hilb^n(\sC)_{\rho}$ be the Hilbert scheme of points on $\sC$ parametrizing the representation $\rho$ of $\mu_r$ of length $n$.  Then $\sC^{[n]}_{\rho}:=\Hilb^n(\sC)_{\rho}$ is a $(\mu_r)^n$-gerbe over the Hilbert scheme of $n$-points 
$C^{[n]}$ on $C$, and we denote by 
$$p^{[n]}: \sC^{[n]}_{\rho}\to C^{[n]}$$
the structure morphism. 
\end{prop}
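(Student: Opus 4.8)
The claim is \'etale-local on the target $C^{[n]}$, so the plan is to (1) produce the structure morphism $p^{[n]}$, (2) analyse the geometric fibres, and (3) establish local triviality of the gerbe after an \'etale base change. For (1): since $p\colon\sC\to C$ is the coarse-moduli morphism of the $\mu_r$-gerbe, it is proper, quasi-finite, and $p_*$ is exact and extracts the weight-$0$ ($\mu_r$-invariant) part; hence for any $\cc$-scheme $T$ and any $T$-flat family $\sZ\subset\sC\times_\cc T$ of zero-dimensional closed substacks of prescribed $\mu_r$-type $\rho$, the relative coarse space $Z\subset C\times_\cc T$ is a $T$-flat family of zero-dimensional subschemes, with fibrewise length the integer $n$ attached to $\rho$ (checked in a root-stack chart $\sS\cong[\spec\cc[t,y]/\mu_r]$, where $p_*$ is the functor of $\mu_r$-invariants). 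This defines $p^{[n]}\colon\sC^{[n]}_\rho\to C^{[n]}$ functorially in $T$.

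For (2) and (3), first restrict to the open $U\subset C^{[n]}$ of reduced subschemes. Over a point of $U$ given by distinct $c_1,\dots,c_n\in C$, the \'etale-local triviality $\sC|_{c_i}\cong B\mu_r$ forces every substack in the fibre of $p^{[n]}$ to be the disjoint union of the $n$ residual gerbes; such a substack is unique up to non-canonical isomorphism, and its automorphisms compatible with the inclusion into $\sC$ form the product $\prod_{i=1}^n\mu_r=(\mu_r)^n$ of the bands. To upgrade this to a gerbe statement I would produce a local section of $p^{[n]}$: pulling back $\sC$ along the universal subscheme $Z\subset C^{[n]}\times C$ gives a $\mu_r$-gerbe $Z\times_C\sC\to Z$, and after an \'etale cover $W\to C^{[n]}$ splitting $Z$ into $n$ sections, the disjoint union of the pulled-back gerbes is a $W$-flat closed substack of $\sC\times_\cc W$, i.e.\ a section of $p^{[n]}$ over $W$. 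Existence of a section makes the gerbe neutral over $W$, and comparing its relative inertia with the explicit $(\mu_r)^n$ found over $U\cap W$ identifies the band with $(\mu_r)^n$. Since $p^{[n]}$ is then smooth with $0$-dimensional fibres and $C^{[n]}$ is smooth and irreducible, the structure should propagate across $C^{[n]}\setminus U$.

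The main obstacle is precisely the non-reduced locus $C^{[n]}\setminus U$: when the $n$ points collide, the substack in the fibre becomes a thickened point of $\sC$, and one must pin down the correct type $\rho$ — the one dictated by Nironi's modified Hilbert polynomial, equivalently by the vanishing of the tautological section of $K_{\sS}^{[n]}$ cutting $\sC^{[n]}$ out of $\Hilb^{\mathbf{c}_0}(\sS)$ — and then verify that the relative inertia stays constant equal to $(\mu_r)^n$, so that no automorphisms are lost in the limit. I would settle this either by a direct computation in the root-stack chart above, tracking the $\mu_r$-weight decomposition of $\mathcal{O}_{\sZ}$ for the relevant thickenings (so that it is, over each coarse point, of induced/regular-representation type), or by the smoothness-and-irreducibility argument just sketched, which reduces the band computation to the transparent reduced locus $U$.
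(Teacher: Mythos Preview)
The paper itself contains no proof of this proposition: it is stated with the citation ``(\cite[Proposition 4.6]{JP})'' and is immediately followed by Proposition~\ref{prop_pullback_linebundle_Hilbert_scheme}, with no argument in between. This is a survey paper, and the proof lives in the cited source \cite{JP}, so there is nothing here to compare your proposal against.

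A brief remark on your approach nonetheless. The overall shape (build $p^{[n]}$ via coarse moduli, check fibres over the reduced locus, then push across the collision locus by smoothness and constancy of inertia) is reasonable, and you are right to isolate the non-reduced locus as the crux. One point to be careful about: by Olsson--Starr \cite{OS03} the Quot/Hilbert functor of closed substacks of a Deligne--Mumford stack is representable by a \emph{scheme}, since a closed substack $\sZ\hookrightarrow\sC$ is a subobject and has no nontrivial automorphisms as a closed immersion. Your fibre computation produces $(\mu_r)^n$ as the automorphism group of the abstract stack $\sZ$, not of the inclusion, so it does not by itself give the Hilbert functor a gerbe structure. The statement therefore rests on whatever precise definition of ``$\Hilb^n(\sC)_\rho$ parametrizing the representation $\rho$'' is used in \cite{JP}; before completing the argument you should pin that definition down and make sure the automorphisms you are counting are the ones the moduli problem actually records.
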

We also have the following results of the line bundles on the Hilbert scheme of points on the gerby curve. 
\begin{prop}\label{prop_pullback_linebundle_Hilbert_scheme}(\cite[Proposition 4.7]{JP})
Let $\sL\to C$ be a line bundle over $C$ and $\sL^{[n]}\to C^{[n]}$ the vector bundle induced by $\sL$. 
For the structure morphism  
$$p^{[n]}: \sC^{[n]}\to C^{[n]},$$ 
we have
$$(p^*\sL)^{[n]}\cong (p^{[n]})^*(\sL^{[n]}).$$
\end{prop}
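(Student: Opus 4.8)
Proposition \ref{prop_pullback_linebundle_Hilbert_scheme} asserts that pulling back a line bundle along the gerbe $p\colon \sC\to C$ and then forming the tautological bundle on the Hilbert scheme agrees with first forming the tautological bundle on $C^{[n]}$ and then pulling back along the structure morphism $p^{[n]}\colon \sC^{[n]}\to C^{[n]}$. The plan is to exploit the universal/tautological description of these bundles together with the compatibility of the universal families under $p$.

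First I would set up the universal subschemes. On $C$ we have the universal length-$n$ subscheme $\sZ_C\subset C^{[n]}\times C$, flat and finite of degree $n$ over $C^{[n]}$, and for a line bundle $\sL$ on $C$ the tautological bundle is $\sL^{[n]} = q_{1*}\big(q_2^*\sL \otimes \sO_{\sZ_C}\big)$, where $q_1,q_2$ are the two projections. On the gerby curve $\sC$, Proposition \ref{prop_Hilbert_scheme_gerbe} tells us $\sC^{[n]}_\rho$ is a $(\mu_r)^n$-gerbe over $C^{[n]}$, and the key structural input is that the universal substack $\sZ_{\sC}\subset \sC^{[n]}\times \sC$ is precisely the base change of $\sZ_C$ along $p^{[n]}\times p$ — i.e. the square relating $(\sZ_{\sC}, \sC^{[n]}\times\sC)$ to $(\sZ_C, C^{[n]}\times C)$ via the gerbe maps is Cartesian (restricted to the appropriate representation-type component). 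This is the geometric heart of the argument and is where one must be careful: it uses that points of $\sC$ lie over points of $C$ with the gerbe structure, so a length-$n$ substack of $\sC$ with representation type $\rho$ maps isomorphically onto its image length-$n$ subscheme of $C$, and conversely the fibre product recovers it.

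Given that, I would compute directly: $(p^*\sL)^{[n]} = q_{1*}\big(q_2^*(p^*\sL)\otimes \sO_{\sZ_{\sC}}\big)$. Rewriting $q_2^* p^* \sL = (p\circ q_2)^*\sL$ and using the Cartesian square to identify $\sO_{\sZ_{\sC}}$ with the pullback of $\sO_{\sZ_C}$, flat base change along $p^{[n]}\colon \sC^{[n]}\to C^{[n]}$ (which is legitimate since $p^{[n]}$ is flat, being a gerbe, and $q_1$ is finite flat) gives $q_{1*}\big(\cdots\big) \cong (p^{[n]})^* q_{1*}\big(q_2^*\sL\otimes\sO_{\sZ_C}\big) = (p^{[n]})^*(\sL^{[n]})$. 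The only subtlety beyond the base-change bookkeeping is tracking the $\mu_r$-equivariant (representation-type $\rho$) decomposition so that one lands on the correct component $\sC^{[n]}_\rho$; since $\sL$ is pulled back from $C$ it carries the trivial fibrewise $\mu_r$-action, so $q_2^* p^*\sL\otimes\sO_{\sZ_{\sC}}$ has its $\mu_r$-weights entirely controlled by $\sO_{\sZ_{\sC}}$, matching the chosen $\rho$, and the pushforward is the $\rho$-isotypic piece, which is exactly what $(p^{[n]})^*$ of a bundle on $C^{[n]}$ produces.

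The main obstacle, then, is not the cohomological formalism but establishing cleanly that the universal family on $\sC^{[n]}_\rho$ is the pullback of the universal family on $C^{[n]}$ — equivalently, that forming Hilbert schemes of points commutes with passing to a $\mu_r$-gerbe in the manner recorded in Proposition \ref{prop_Hilbert_scheme_gerbe}. I would either cite that proposition directly and phrase the present proof as a formal consequence, or, if a self-contained argument is wanted, verify the Cartesian property on the level of functors of points (a family of length-$n$ substacks of $\sC$ over a base $T$, of representation type $\rho$, is the same as a family of length-$n$ subschemes of $C$ over $T$ together with the gerbe data, hence factors through $p^{[n]}$). Once that identification is in hand, everything else is the standard projection-formula-plus-flat-base-change manipulation and requires no serious new input.
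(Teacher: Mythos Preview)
Your argument is correct and is the natural one: identify the universal substack on $\sC^{[n]}_\rho\times\sC$ as the base change of the universal subscheme on $C^{[n]}\times C$ along the gerbe maps, then apply flat base change to the finite pushforward defining the tautological bundle. The observation that $p^*\sL$ carries trivial fibrewise $\mu_r$-weight, so that no isotypic shift occurs, is the right way to handle the representation-type bookkeeping.

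Note, however, that the present paper is a survey and does not itself prove the statement; it merely cites \cite[Proposition~4.7]{JP}. So there is no in-text proof to compare against. Your approach is the standard one and is almost certainly what the cited reference does as well: the content of Proposition~\ref{prop_Hilbert_scheme_gerbe} is precisely that $p^{[n]}$ is a $(\mu_r)^n$-gerbe, which (together with the functor-of-points description you sketch) yields the Cartesian square for the universal families, and the rest is the formal flat-base-change computation you wrote down. The one place to be slightly careful is the sentence ``a length-$n$ substack of $\sC$ with representation type $\rho$ maps isomorphically onto its image length-$n$ subscheme of $C$'': the map to the coarse space is not an isomorphism of stacks (the substack retains its $\mu_r$-automorphisms), but its \emph{coarse moduli} is the corresponding subscheme of $C$, and that is all you need for the Cartesian diagram and for base change to apply.
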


From Proposition \ref{prop_Hilbert_scheme_gerbe} and Proposition \ref{prop_pullback_linebundle_Hilbert_scheme}, one can use the calculation for the Hilbert scheme of points on $C$ in \cite[\S 8.4]{TT1} to calculate the integral on the Hilbert scheme of points on the gerby curve $\sC$.
Let us first review the tautological classes on $C^{[n]}$ for the smooth curve $C$. 
Let 
$$\omega:=\PD[C^{[n-1]}]\in H^2(C^{[n]},\zz)$$
where $C^{[n-1]}\subset C^{[n]}$ is a smooth divisor given by $Z\mapsto Z+x$ for a base point $x\in C$, and $\PD$ represents the Poincare dual. 
The second one is given by the Abel-Jacobi map:
$$\AJ: C^{[n]}\to \Pic^n(C); \quad   Z\mapsto \sO(Z).$$
Since tensoring with power of $\sO(x)$ makes the $\Pic^n(C)$ isomorphic for all $n$, the pullback of the theta divisor from $\Pic^{g-1}(C)$  gives a cohomology class 
$$\theta\in H^2(\Pic^n(C),\zz)\cong \Hom(\Lambda^2H^1(C,\zz),\zz).$$
Still let $\theta$ to denote its pullback 
$\AJ^*\theta$, so 
$$\theta\in H^2(C^{[n]},\zz),$$
which is the second tautological class.  The basic property (\cite[\S I.5]{ACGH}) is:
\begin{equation}\label{eqn_basic_formula1}
\int_{C^{[n]}}\frac{\theta^i}{i!}\omega^{n-i}=\mat{c}g\\
i\rix,
\end{equation}
and 
$$
\begin{cases}
c_t(T_{C^{[n]}})=(1+\omega t)^{n+1-g}\exp\left(\frac{-t\theta}{1+\omega t}\right);\\
c_t(\sL^{[n]})=(1-\omega t)^{n+g-1-\deg \sL}\exp\left(\frac{t\theta}{1-\omega t}\right).
\end{cases}
$$
So from Proposition \ref{prop_Hilbert_scheme_gerbe}, 
$$(p^{[n]})^*\omega=r^{n}\omega; \quad   (p^{[n]})^*\theta=r^{n}\theta.$$

\begin{lem}\label{lem_two_formula_for_gerbe_Hilbert_scheme}(\cite{JP})
We have 
$$
\begin{cases}
c_t(T_{\sC^{[n]}})=(1+r^{n}\omega t)^{n+1-g}\exp\left(\frac{-t\cdot r^n\cdot\theta}{1+r^n\omega t}\right);\\
c_t((p^*\sL)^{[n]})=(1-r^n\omega t)^{n+g-1-\deg \sL}\exp\left(\frac{t r^n\theta}{1-r^n\omega t}\right).
\end{cases}
$$
\end{lem}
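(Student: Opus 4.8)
The plan is to deduce the two formulas in Lemma~\ref{lem_two_formula_for_gerbe_Hilbert_scheme} directly from the corresponding formulas on $C^{[n]}$ recalled just above the statement, by pulling back along the structure morphism $p^{[n]}\colon \sC^{[n]}\to C^{[n]}$ of the $(\mu_r)^n$-gerbe (Proposition~\ref{prop_Hilbert_scheme_gerbe}) and using that this pullback is injective on cohomology with $\qq$-coefficients, so that an identity of Chern classes upstairs is equivalent to its pullback downstairs. Concretely, I would first record the two inputs: the pullback relations $(p^{[n]})^*\omega = r^n\omega$ and $(p^{[n]})^*\theta = r^n\theta$ stated in the excerpt, and Proposition~\ref{prop_pullback_linebundle_Hilbert_scheme}, which gives $(p^*\sL)^{[n]}\cong (p^{[n]})^*(\sL^{[n]})$ as vector bundles on $\sC^{[n]}$.

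For the second formula, this is essentially immediate: apply $(p^{[n]})^*$ to the given expression for $c_t(\sL^{[n]})$, use that Chern classes commute with pullback, substitute $(p^{[n]})^*\omega = r^n\omega$ and $(p^{[n]})^*\theta = r^n\theta$, and invoke Proposition~\ref{prop_pullback_linebundle_Hilbert_scheme} to identify the left-hand side with $c_t\big((p^*\sL)^{[n]}\big)$. The exponent $n+g-1-\deg\sL$ is an integer that does not involve $\omega$ or $\theta$, so it is unchanged; the only substitutions are $\omega t \mapsto r^n\omega t$ and $\theta \mapsto r^n\theta$, which yields exactly the claimed identity.

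For the first formula I need a description of the tangent bundle $T_{\sC^{[n]}}$. The cleanest route is to use that $p^{[n]}$ is étale (a gerbe morphism is representable étale on coarse spaces up to the banded gerbe structure; in any case the relative cotangent complex of a $(\mu_r)^n$-gerbe vanishes), so $T_{\sC^{[n]}} \cong (p^{[n]})^* T_{C^{[n]}}$. Granting this, the argument is the same as before: pull back the given formula for $c_t(T_{C^{[n]}})$ and perform the substitutions $\omega t \mapsto r^n\omega t$, $\theta \mapsto r^n\theta$, leaving the integer exponent $n+1-g$ intact. The main obstacle—if there is one—is precisely justifying $T_{\sC^{[n]}} \cong (p^{[n]})^* T_{C^{[n]}}$ and the injectivity of $(p^{[n]})^*$ on rational cohomology; both follow from standard facts about gerbes banded by finite groups (étaleness of $B\mu_r \to \mathrm{pt}$ and the fact that pushforward along a $G$-gerbe splits off the trivial-representation part, which is all of $H^*(\,\cdot\,,\qq)$ in the untwisted sector), so once these are cited the rest is a mechanical substitution requiring no further computation.
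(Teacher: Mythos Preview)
Your proposal is correct and matches the paper's approach: the lemma is stated there with a citation to \cite{JP} and no independent proof, but the surrounding setup---Proposition~\ref{prop_Hilbert_scheme_gerbe}, Proposition~\ref{prop_pullback_linebundle_Hilbert_scheme}, and the pullback relations $(p^{[n]})^*\omega = r^n\omega$, $(p^{[n]})^*\theta = r^n\theta$---is laid out precisely so that the formulas on $\sC^{[n]}$ are obtained by pulling back the classical ones on $C^{[n]}$, exactly as you outline. One small remark: you do not actually need injectivity of $(p^{[n]})^*$ on rational cohomology, since you are only pulling an identity \emph{up} from $C^{[n]}$ to $\sC^{[n]}$, which follows from naturality of Chern classes together with the bundle identifications $(p^*\sL)^{[n]}\cong (p^{[n]})^*(\sL^{[n]})$ and $T_{\sC^{[n]}}\cong (p^{[n]})^*T_{C^{[n]}}$.
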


Thus we calculate
\begin{align}\label{eqn_key_formula1}
&\text{Right side of ~} (\ref{eqn_rM_integral})=   \\
&(-2)^{\dim}\cdot 2^n\int_{\sC^{[n]}}
\frac{(1-\frac{r^n\omega}{2})^{n+1-g}\cdot e^{\frac{r^n\theta}{2-r^n\omega}}\cdot (1-r^n\omega)^{n+1-g}\cdot e^{\frac{r^n\theta}{1-r^n\omega}}\cdot (1+r^n\omega)^n\cdot e^{\frac{-r^n\theta}{1+r^n\omega}}}
{(1-r^n\omega)^n\cdot e^{\frac{r^n\theta}{1-r^n\omega}}\cdot (1-r^n\omega)^{n+1-g}\cdot e^{\frac{r^n\theta}{1-r^n\omega}}}\nonumber \\
&=(-2)^{\dim}\cdot 2^{g-1}(-1)^{n+1-g}\cdot \int_{\sC^{[n]}}
(r^n\omega-2)^{n+1-g}\cdot \frac{(1+r^n\omega)^n}{(1-r^n\omega)^n}\cdot 
e^{\frac{r^n\theta}{2-r^n\omega}-\frac{r^n\theta}{1+r^n\omega}-\frac{r^n\theta}{1-r^n\omega}}. \nonumber
\end{align}

Now we use (\ref{eqn_basic_formula1}) and Proposition \ref{prop_Hilbert_scheme_gerbe} to get 
\begin{align*}
\int_{\sC^{[n]}}\frac{(r^n\theta)^i}{i!}\cdot (r^n\omega)^{n-i}
=\int_{C^{[n]}} \frac{(\theta)^i}{i!}\cdot (\omega)^{n-i}=\mat{c}g\\i\rix.
\end{align*}
Hence whenever we have $\frac{(r^n\theta)^i}{i!}$ in the integrand involving only power of $\omega$ we can replace it by 
$\mat{c}g\\i\rix (r^n\omega)^{n-i}$.  Therefore for $\alpha$ a power series of $\omega$, 
$$e^{\alpha (r^n \theta)}=\sum_{i=0}^{\infty}\alpha^i \frac{(r^n\theta)^i}{i!}\sim
\sum_{i=0}^{\infty}\alpha^i \mat{c}g\\i\rix (r^n\omega)^{n-i}=(1+\alpha (r^n\omega))^g.$$

When we do the integration against $\sC^{[n]}$, $\sim$ becomes equality, and 
(\ref{eqn_key_formula1}) is: 
\begin{align}\label{eqn_key_formula2}
&(-2)^{\dim}\cdot 2^{g-1}(-1)^{n+1-g}\cdot \int_{\sC^{[n]}}
(r^n\omega-2)^{n+1-g}\cdot \frac{(1+r^n\omega)^n}{(1-r^n\omega)^n}\cdot 
(1+\frac{r^n\omega}{2-r^n\omega}-\frac{r^n\omega}{1+r^n\omega}-\frac{r^n\omega}{1-r^n\omega})^g
 \\
&=(-2)^{\dim}\cdot (-2)^{g-1}(-1)^{n}\cdot \int_{\sC^{[n]}}
(r^n\omega-2)^{n+1-2g}\cdot \frac{(1+r^n\omega)^{n-g}}{(1-r^n\omega)^{n+g}}\cdot (4r^n\omega-2)^g. \nonumber
\end{align}

\subsubsection{Writing the generating function}\label{subsec_generating_function}

Recall that for the $\cc^*$-fixed Higgs pair $(E,\phi)$, we fix 
$$(\widetilde{\Ch}_1)^2(E)=2. \quad (\widetilde{\Ch}_1)^1(E)=-c_1(\sS).$$
For $g_i=\zeta^i\in\mu_r (1\leq i\leq r-1)$, $E=E_{g_i}^1\oplus E_{g_i}^2$ is the decomposition under the $g_i$-action into eigen-subsheaves.  We calculate and denote by 
\begin{equation}\label{eqn_Ci_C1}
\beta_{g_i}:=(\widetilde{\Ch}_{g_i})^1(E)=e^{2\pi i \frac{f_{i1}}{r}}+e^{2\pi i \frac{f_{i2}}{r}}.
\end{equation}
Then we let
\begin{equation}\label{eqn_Ci_C2}
n_{i}:=(\widetilde{\Ch}_{g_i})^0(E)=e^{2\pi i \frac{f_{i1}}{r}}c_1(E_{g_i}^1)+e^{2\pi i \frac{f_{i2}}{r}}c_1(E_{g_i}^2).
\end{equation}

 We introduce variables $q$ to keep track of the second Chern class $c_2(E)$ of the torsion free sheaf $E$, 
$q_1, \cdots, q_{r-1}$ to keep track of the classes $n_i$ for $i=1,\cdots, r-1$. 
Then we write 
\begin{align*}
&(-2)^{-\dim}\cdot (-2)^{1-2g}(-1)^{n}\cdot \sum_{n=0}^{\infty}q^n\cdot q_1^{n_1}\cdots q_{r-1}^{n_{r-1}} \cdot \int_{\sC^{[n]}}\frac{1}{N^{\vir}}\\
&=\sum_{n=0}^{\infty}q^n \cdot q_1^{n_1}\cdots q_{r-1}^{n_{r-1}} \cdot \int_{\sC^{[n]}}(r^n\omega-2)^{n+1-2g}\cdot \frac{(1+r^n\omega)^{n-g}}{(1-r^n\omega)^{n+g}}\cdot (1-2r^n\omega)^g. 
\end{align*}
\begin{rmk}
Since the moduli of stable Higgs pairs on $\sS$ is isomorphic to the moduli space of  parabolic  Higgs pairs on 
$(S,C)$, see \cite{Jiang2}. We will see that the variables $q_1, \cdots, q_{r-1}$ will keep track of the parabolic degree of the sheaf $E$ on the curve $C\subset S$. 
\end{rmk}

For simplicity, we deal with the case $q_1=\cdots=q_{r-1}=1$. 
So 
\begin{align}\label{eqn_key_formula3}
&(-2)^{-\dim}\cdot (-2)^{1-2g}(-1)^{n}\cdot \sum_{n=0}^{\infty}q^n \int_{\sC^{[n]}}\frac{1}{N^{\vir}}\\
&=\sum_{n=0}^{\infty}q^n \int_{\sC^{[n]}}(r^n\omega-2)^{n+1-2g}\cdot \frac{(1+r^n\omega)^{n-g}}{(1-r^n\omega)^{n+g}}\cdot (1-2r^n\omega)^g. \nonumber
\end{align}

Since $\sC^{[n]}$ has dimension $n$,  the integrand in (\ref{eqn_key_formula3}) only involves the power of $\omega$, and 
$$\int_{\sC^{[n]}}(r^n\omega)^{n}=\int_{C^{[n]}}(\omega)^{n}=1,$$ therefore 
\begin{align}\label{eqn_key_formula4}
&(-2)^{-\dim}\cdot (-2)^{1-2g}(-1)^{n}\cdot \sum_{n=0}^{\infty}q^n \int_{\sC^{[n]}}\frac{1}{N^{\vir}}\\
&=\sum_{n=0}^{\infty}q^n \int_{C^{[n]}}(\omega-2)^{n+1-2g}\cdot \frac{(1+\omega)^{n-g}}{(1-\omega)^{n+g}}\cdot (1-2\omega)^g. \nonumber
\end{align}

Then we perform the same careful Contour integral calculations as in \cite[\S 8.5]{TT1} by using \cite[\S 6.3]{St2}. 
We have the following result:

\begin{thm}\label{thm_generating_function_root_quintic}
We have 
\begin{equation}\label{eqn_prop_formula}
\sum_{n=0}^{\infty}q^n \int_{\sC^{[n]}}\frac{1}{N^{\vir}}=
A\cdot (1-q)^{g-1}\left(1+\frac{1-3q}{\sqrt{(1-q)(1-9q)}}\right)^{1-g},
\end{equation}
where 
$A:=(-2)^{\dim}\cdot (-2)^{2g-1}$.    $\square$
\end{thm}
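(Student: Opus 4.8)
The starting point is equation~(\ref{eqn_key_formula4}), which has already reduced the theorem to evaluating the power series
$$G(q):=\sum_{n=0}^{\infty}q^n\int_{C^{[n]}}(\omega-2)^{n+1-2g}\,\frac{(1+\omega)^{n-g}}{(1-\omega)^{n+g}}\,(1-2\omega)^g,$$
with the constant $A=(-2)^{\dim}(-2)^{2g-1}$ and the sign $(-1)^n$ of~(\ref{eqn_key_formula4}) reinstated at the very end. The first step is to observe that $\dim C^{[n]}=n$ and, by~(\ref{eqn_basic_formula1}) with $i=0$, $\int_{C^{[n]}}\omega^n=1$ while every other monomial in $\omega$ integrates to zero; hence the $n$-th integral is exactly the coefficient of $\omega^n$ in the (convergent, for small $|\omega|$) power-series expansion of the integrand. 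I would then write this coefficient as a residue,
$$\int_{C^{[n]}}(\omega-2)^{n+1-2g}\frac{(1+\omega)^{n-g}}{(1-\omega)^{n+g}}(1-2\omega)^g=\frac{1}{2\pi i}\oint_{|\omega|=\epsilon}\Big(\frac{(\omega-2)(1+\omega)}{1-\omega}\Big)^{\!n}\frac{(\omega-2)^{1-2g}(1-2\omega)^g}{(1+\omega)^g(1-\omega)^g}\,\frac{d\omega}{\omega^{n+1}},$$
isolating the single power of $\omega$ that depends on $n$ so that the sum over $n$ can be performed.

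Next I would interchange the $n$-summation with the contour integral, which is legitimate for $q$ in a small disc because on $|\omega|=\epsilon$ one has $|q(\omega-2)(1+\omega)/(\omega(1-\omega))|<1$, and sum the resulting geometric series. This collapses $G(q)$ to a single contour integral
$$G(q)=\frac{1}{2\pi i}\oint_{|\omega|=\epsilon}\frac{(\omega-2)^{1-2g}(1-2\omega)^g(1-\omega)^{1-g}(1+\omega)^{-g}}{\,\omega(1-\omega)-q(\omega-2)(1+\omega)\,}\,d\omega .$$
The denominator $\omega(1-\omega)-q(\omega-2)(1+\omega)=-(1+q)\omega^2+(1+q)\omega+2q$ is quadratic in $\omega$; once the sign $(-1)^n$ of~(\ref{eqn_key_formula4}) is folded in, equivalently after replacing $q$ by $-q$, it becomes $-(1-q)\omega^2+(1-q)\omega-2q$, whose discriminant is precisely $(1-q)(1-9q)$, the expression appearing under the square root in the theorem. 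Its roots are $\omega_{\pm}(q)=\tfrac12\bigl(1\pm\sqrt{(1-9q)/(1-q)}\bigr)$, and as $q\to 0$ one has $\omega_-(q)\to 0$, $\omega_+(q)\to 1$, so for $q$ small only $\omega_-(q)$ lies inside $|\omega|=\epsilon$.

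Finally I would evaluate the integral by the residue theorem at the simple pole $\omega=\omega_-(q)$. Writing $t:=\sqrt{(1-9q)/(1-q)}$, so that $\omega_-=\tfrac12(1-t)$, one has the clean identities $1-2\omega_-=t$, $1-\omega_-=\tfrac12(1+t)$, $1+\omega_-=\tfrac12(3-t)$, $\omega_--2=-\tfrac12(3+t)$, $1-q=8/\bigl((3-t)(3+t)\bigr)$ and the derivative of the denominator at $\omega_-$ equal to $(1-q)(1-2\omega_-)=\sqrt{(1-q)(1-9q)}$. Substituting these, the four $g$-dependent factors of the numerator together with the $1/\sqrt{(1-q)(1-9q)}$ coming from the residue collapse, after the elementary manipulation $1+\tfrac{1-3q}{\sqrt{(1-q)(1-9q)}}=\tfrac{(1+t)(3+t)}{4t}$, into $(1-q)^{g-1}\bigl(1+\tfrac{1-3q}{\sqrt{(1-q)(1-9q)}}\bigr)^{1-g}$ up to a numerical prefactor; collecting this prefactor with the constants carried through~(\ref{eqn_key_formula1})--(\ref{eqn_key_formula4}) produces $A=(-2)^{\dim}(-2)^{2g-1}$ and gives the stated formula. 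This last reduction is exactly the contour computation of \cite[\S 8.5]{TT1} performed with the auxiliary identities of \cite[\S 6.3]{St2}. \textbf{The main obstacle} is precisely this final algebraic bookkeeping: one must recognise each of $1\pm\omega_-$, $1-2\omega_-$, $\omega_--2$ as a rational function of $q$ times a power of $\sqrt{(1-q)(1-9q)}$ and then verify that the $g$-th powers assemble into the compact closed form with the correct overall sign and constant. The analytic part, namely justifying the interchange of sum and integral for small $q$ and checking that only $\omega_-(q)$ and not $\omega_+(q)$ is enclosed, is routine but must be stated carefully, since choosing the wrong root or the wrong branch of the square root would change the answer.
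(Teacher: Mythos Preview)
Your proposal is correct and is precisely the approach the paper has in mind: the paper's own argument for this theorem consists of the single sentence ``Then we perform the same careful Contour integral calculations as in \cite[\S 8.5]{TT1} by using \cite[\S 6.3]{St2}'', and what you have written is an explicit unpacking of exactly that computation, starting from~(\ref{eqn_key_formula4}), passing to a residue, summing the geometric series, and evaluating at the unique small root of the quadratic. Your identification of the $(-1)^n$ with a substitution $q\mapsto -q$, the discriminant $(1-q)(1-9q)$, the parametrisation $t=\sqrt{(1-9q)/(1-q)}$, and the identity $1+\tfrac{1-3q}{\sqrt{(1-q)(1-9q)}}=\tfrac{(1+t)(3+t)}{4t}$ are all correct and match the corresponding steps in \cite[\S 8.5]{TT1}.
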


\subsection{Quintic surfaces with ADE singularities}\label{subsec_quintic_ADE}

Let $\sS$ be a quintic surface with isolated ADE singularities.  We take $\sS$ as a surface DM stack.  From \cite{Horikava}, the coarse moduli space of the DM stack $\sS$ lies in the 
component of smooth quintic surfaces in the moduli space of general type surfaces with topological invariants in 
(\ref{eqn_topology_invariants_quintic}).  This means that there exists a deformation family such that the smooth quintic surfaces can be deformed to quintic surfaces with ADE singularities.

Let us fix a quintic surface $\sS$, with $P_1,\cdots, P_s\in \sS$ the isolated singular points with ADE type. 
Let $G_1,\cdots, G_s$ be the local ADE finite group in $SU(2)$ corresponding to $P_1,\cdots, P_s\in \sS$. 
We use the notation $|G_i|$ to represent the set of conjugacy classes for $G_i$. 
Let $\sX=\mbox{Tot}(K_{\sS})$ be the total space of $K_{\sS}$, which is a Calabi-Yau smooth DM stack.  Choose a generating sheaf $\Xi$ on $\sS$ such that it contains all the irreducible representations of the local group $G_i$ of $P_i$.  
Fix a $K$-group class $\mathbf{c}\in K_0(\sS)$ (determining a Hilbert polynomial $H$),  and let $\N$ be the moduli space of stable Higgs pairs with $K$-group class $\mathbf{c}$. We work on the Vafa-Witten invariants 
$\VW$ for the moduli space $\N^{\perp}_{\mathbf{c}}$ of stable fixed determinant $K_{\sS}$ and 
trace-free  Higgs pairs with $K$-group class $\mathbf{c}$. 

In this case we have a similar result as in Proposition \ref{prop_second_fixed_loci_Hilbert_scheme}. In the first case that 
$\sZ_1=\emptyset$, we get the same result by deformation invariance of the Vafa-Witten invariants $\VW$. 
We include the calculation for another cases here. 

We calculate one vertical term as in \cite{TT1}, and explain this time it will not give the same invariants as in the smooth case. 
This case is that $[\sZ_1]=[\sZ_0]\in L_0(\sS)$ component in $\rM^{(2)}$. 
So in this case $\Phi: \sI_0\to \sI_1$ is an isomorphism. Therefore
$$\Hilb^{\mathbf{c}_0, \mathbf{c}_0}(\sS)=\Hilb^{\mathbf{c}_0}(\sS)$$
and 
$$E=\sI_{\sZ}\otimes K_{\sS}\oplus \sI_{\sZ}\cdot \Tt^{-1}; \quad
\phi=\mat{cc}0&0\\
1&0\rix : E\to E\otimes K_{\sS}\cdot \Tt,$$
where $\sZ\subset \sS$ is a zero dimensional substack with $K$-group class $\mathbf{c}_0$. 
We use the same arguments as in \cite[\S 8.7]{TT1} for the torsion sheaf 
$\sE_{\phi}$ on $\sX$, which is the twist 
\begin{equation}\label{eqn_FZ}
\sF_{\sZ}:=(\pi^*\sI_{\sZ}\otimes \sO_{2\sS})
\end{equation}
by $\pi^*K_{\sS}$. Look at the following exact sequence:
$$0\to \pi^*\sI_{\sZ}(-2\sS)\longrightarrow \pi^*\sI_{\sZ}\longrightarrow \sF_{\sZ}\to 0,$$
we have
$$R\Hom(\sF_{\sZ},\sF_{\sZ})\to R\Hom(\pi^*\sI_{\sZ}, \sF_{\sZ})\to 
R\Hom(\pi^*\sI_{\sZ}, \pi_*\sF_{\sZ}(2\sS)).$$
The second arrow is zero since the section $\sO(2\sS)$ cutting out $2\sS\subset \sX$ annihilates $\sF_{\sZ}$. So by adjunction and the formula $\pi_*\sF_{\sZ}=\sI_{\sZ}\oplus \sI_{\sZ}\otimes K_{\sS}^{-1}\cdot \Tt^{-1}$, we have 
\begin{multline*}
R\Hom(\sF_{\sZ},\sF_{\sZ})\cong R\Hom_{\sS}(\sI_{\sZ},\sI_{\sZ})\oplus R\Hom_{\sS}(\sI_{\sZ},\sI_{\sZ}\otimes K_{\sS}^{-1})\Tt^{-1} \\
\oplus  R\Hom_{\sS}(\sI_{\sZ},\sI_{\sZ}\otimes K^2_{\sS})\Tt^{2}[-1]\oplus  R\Hom_{\sS}(\sI_{\sZ},\sI_{\sZ}\otimes K_{\sS})\Tt[-1].
\end{multline*}
We calculate the perfect obstruction theory
$$R\Hom_{\sX}(\sF_{\sZ},\sF_{\sZ})_{\perp}[1]$$
which comes from taking trace-free parts of the first and last terms and we have $\Hom_{\perp}=\Ext^3_{\perp}=0$. 
We have:
\begin{multline}\label{eqn_FZ_deformation}
\Ext^1_{\sX}(\sF_{\sZ},\sF_{\sZ})_{\perp}= \Ext^1_{\sS}(\sI_{\sZ},\sI_{\sZ})\oplus \Ext^1_{\sS}(\sI_{\sZ},\sI_{\sZ}\otimes K_{\sS}^{-1})\Tt^{-1} \\
\oplus  \Hom_{\sS}(\sI_{\sZ},\sI_{\sZ}\otimes K^2_{\sS})\Tt^{2}\oplus  \Hom_{\sS}(\sI_{\sZ},\sI_{\sZ}\otimes K_{\sS})_0\Tt.
\end{multline}
The obstruction $\Ext^2_{\perp}$ is just the dual of the above tensored with $\Tt^{-1}$.  The first term in (\ref{eqn_FZ_deformation}) is $T_{\sZ}\Hilb^{\mathbf{c}_0}(\sS)$, the fixed part of the deformations. The last term 
$\Hom_{\sS}(\sI_{\sZ},\sI_{\sZ}\otimes K_{\sS})_0\Tt=0$ since $\sI_{\sZ}\to \sI_{\sZ}$ is an isomorphism and taking trace-free we get zero. 
So the fixed part of the obstruction vanishes by duality. This tells us that 
$$[\Hilb^{\mathbf{c}_0}(\sS)]^{\vir}=[\Hilb^{\mathbf{c}_0}(\sS)].$$
Then the virtual normal bundle is: 
\begin{multline}\label{eqn_FZ_virtual_normal_bundle}
N^{\vir}= \Big[\Ext^1_{\sS}(\sI_{\sZ},\sI_{\sZ}\otimes K_{\sS}^{-1})\Tt^{-1} \oplus  \Hom_{\sS}(\sI_{\sZ},\sI_{\sZ}\otimes K^2_{\sS})\Tt^{2}\Big]-\\
\Big[\Ext^1_{\sS}(\sI_{\sZ},\sI_{\sZ}\otimes K_{\sS})\Tt\oplus \Ext^1_{\sS}(\sI_{\sZ},\sI_{\sZ}\otimes K_{\sS}^2)\Tt^2\oplus   \Ext^2_{\sS}(\sI_{\sZ},\sI_{\sZ}\otimes K^{-1}_{\sS})\Tt^{-1}\Big].
\end{multline}

It is quite complicated to integrate to equivariant Chern class on $\Hilb^{\mathbf{c}_0}(\sS)$, but we can do an easy case. 
Let $(\sS, P)$ be a quintic surface with only a singular point $P\in \sS$ with $A_1$-type singularity. 
By choosing a constant modified Hilbert polynomial $1$ on $\sS$ such that under the orbifold Chern character morphism:
$$K_0(\sS)\to H^*(I\sS)=H^*(\sS)\oplus H^*(B\mu_2)$$
the class 
$1\mapsto (1,1)$
where the second $1$ means the trivial one dimensional $\mu_2$-representation.  Then in this case 
the Hilbert scheme 
$\Hilb^{1}(\sS)=\pt$
which is a point.  This can be seen as follows. Around the singular point $P$, there is an open affine neighborhood 
$P\in U\subset \sS$ such that 
$$U\cong [\cc^2/\mu_2]$$
where $\zeta\in \mu_2$ acts on $\cc^2$ by 
$$\zeta\cdot (x,y)=(\zeta x, \zeta^{-1}y).$$
The Hilbert scheme of one point on $P\in U$ corresponds to invariant $\mu_2$-representation of length $1$, which must be trivial.  Then integration in this case must be: 
\begin{align*}
\int_{\Hilb^1(\sS)}\frac{1}{e(N^{\vir})}=\int_{\pt}1=1.
\end{align*}

Here is  a degree two calculation. 
Let $2$ be a constant modified Hilbert polynomial  on $\sS$ such that under the orbifold Chern character morphism:
$$K_0(\sS)\to H^*(I\sS)=H^*(\sS)\oplus H^*(B\mu_2)$$
the class 
$2\mapsto (2,2)$
where the second $2$ means the regular two dimensional $\mu_2$-representation.  Then in this case 
the Hilbert scheme 
$\Hilb^{2}(\sS)=\widetilde{S}$
where 
$\sigma: \widetilde{S}\to S$
is the crepant resolution of the coarse moduli space $S$ of $\sS$.  Then $\widetilde{S}$ is still a smooth surface. 
Then the  integration in this case can be written as: 
\begin{align*}
\int_{\widetilde{S}}\frac{1}{e(N^{\vir})}&=\int_{\widetilde{S}}
\frac{e(T^*_{\widetilde{S}}\Tt)\cdot e(T_{\widetilde{S}}\otimes K_{\sS}^2\Tt^2)\cdot e(H^0(K_{\sS}^2)^*\Tt^{-1})}
{e(T_{\widetilde{S}}\otimes K_{\sS}^{-1}\Tt^{-1})\cdot e(H^0(K_{\sS}^2)\Tt^2)}\\
&=\int_{\widetilde{S}}\frac{t^2 c_{\frac{1}{t}}(T^*_{\widetilde{S}})\cdot (2t)^2 c_{\frac{1}{2t}}(T_{\widetilde{S}}\otimes K_{\sS}^2)\cdot (-t)^{\dim}}
{(-t)^2 c_{-\frac{1}{t}}(T_{\widetilde{S}}\otimes K_{\sS}^{-1})\cdot (2t)^{\dim}}
\end{align*}
Only the $t^0$ term contributes and we let $t=1$, and get 

$$(-2)^{\dim}\int_{\widetilde{S}}
\frac{(1-\widetilde{c}_1+\widetilde{c}_2)4(1+\frac{1}{2}c_1(T_{\widetilde{S}}\otimes K_{\sS}^2)+\frac{1}{4}c_2(T_{\widetilde{S}}\otimes K_{\sS}^2))}
{1-c_1(T_{\widetilde{S}}\otimes K_{\sS}^{-1})+c_2(T_{\widetilde{S}}\otimes K_{\sS}^{-1})}$$
where $\widetilde{c}_i=c_i(\widetilde{S})$.  
Let 
$c_1:=c_1(T_{\sS}),$
then $c_1(K_{\sS})=-c_1$. 
We use the same formula for the Chern classes:
$$c_1(T_{\widetilde{S}}\otimes K_{\sS}^{2})=\widetilde{c}_1-4c_1, \quad  c_2(T_{\widetilde{S}}\otimes K_{\sS}^{2})=\widetilde{c}_2-2\widetilde{c}_1\cdot c_1+4 c_1^2;$$ 
$$c_1(T_{\widetilde{S}}\otimes K_{\sS}^{-1})=\widetilde{c}_1+2c_1, \quad c_2(T_{\widetilde{S}}\otimes K_{\sS}^{-1})=\widetilde{c}_2+\widetilde{c}_1\cdot c_1+c_1^2 $$
and have 
\begin{align}\label{eqn_final_integral_ADE}
\int_{\widetilde{S}}\frac{1}{e(N^{\vir})}
&=(-2)^{-\dim}\int_{\widetilde{S}}\frac{(1-\widetilde{c}_1+\widetilde{c}_2)4(1+\frac{1}{2}(\widetilde{c}_1-4c_1)+\frac{1}{4}(\widetilde{c}_2-2\widetilde{c}_1\cdot c_1+4c_1^2))}
{1-(\widetilde{c}_1+2c_1)+\widetilde{c}_2+ \widetilde{c}_1\cdot c_1+c_1^2} \\
&=(-2)^{-\dim}\int_{\widetilde{S}}
(\widetilde{c}_2+14 \widetilde{c}_1\cdot c_1+ 4(\widetilde{c}_1)^2).  \nonumber
\end{align}

\subsubsection{Comparison with the Euler characteristic of the Hilbert scheme of points}

The generating function of the Hilbert schemes of points on $\sS$ (a surface with finite ADE singularities $P_1, \cdots, P_s$) has been studied in \cite{GNS}, \cite{Toda}.  We recall the formula for the surface $\sS$ with $A_n$ singularities from \cite{Toda}.  
Let $P_1,\cdots, P_s$ have singularity type $A_{n_1}, \cdots, A_{n_s}$. 
Let $\sS\to S$ be the map to its coarse moduli space and $\sigma: \widetilde{S}\to S$ be the minimal resolution. 
Toda used wall crossing formula to calculate that 
\begin{equation}\label{eqn_generating_Hilbert_scheme}
\sum_{n\geq 0}\chi(\Hilb^n(\sS))q^{n-\frac{\chi(\widetilde{S})}{24}}=\eta(q)^{-\chi(\widetilde{S})}\cdot \prod_{i=1}^{s}\Theta_{n_i}(q),
\end{equation}
where $\eta(q)=q^{\frac{1}{24}}\prod_{n\geq 1}(1-q^n)$ is the Dedekind eta function,  and 
$$
\Theta_{n}(q)=\sum_{(k_1,\cdots, k_n)\in\zz^n}q^{\sum_{1\leq i\leq j\leq n}k_i k_j}
e^{\frac{2\pi \sqrt{-1}}{n+2}(k_1+2k_2+\cdots+nk_n)}$$

The series $\Theta_n(q)$ 
is a $\qq$-linear combination of the theta series determined by some integer valued positive definite quadratic forms on $\zz^n$ and   $\Theta_n(q)$ is a modular form of weight $n/2$.  So 
the generating series (\ref{eqn_generating_Hilbert_scheme}) is a Fourier development of a meromorphic modular form of weight $-\chi(S)/2$ for some congruence subgroup in $SL_2(\zz)$.
So this implies that it should be related to the S-duality conjecture for such surface DM stacks $\sS$. 
But the Euler characteristic of $\Hilb^n(\sS)$ is not the same as the contribution of it to the Vafa-Witten invariants $\VW(\sS)$, which is the integration over its virtual fundamental cycle. 

If the surface DM stack $\sS$ is a smooth projective surface $S$, the formula (\ref{eqn_generating_Hilbert_scheme}) is reduced to the G\"ottsche formula
$$\sum_{n\geq 0}\chi(\Hilb^n(S))q^{n-\frac{\chi(S)}{24}}=\eta(q)^{-\chi(S)}.
$$

\section{Evidence of the proposal for the S-duality}\label{sec_S_duality_P2}

In this section we check the S-duality for the projective plane $\pp^2$ in rank two, which is proved in \cite{Jiang_twist}; and give predictions on other cases. 
We include an extra comparison with $\pp(1,2,2)$, a $2$-th root stack of $\pp^2$ which is not included in \cite{Jiang_twist}. 

The $\pp^2$ case is already discussed in \cite[\S 4.2]{VW}, where they use the mathematical result of Klyachko and Yoshioka.  We will see that the theory of counting invariants for the gauge group $SU(2)/\zz_2=SO(3)$ are given by the invariants for the moduli space of semistable torsion free sheaves on the $\mu_2$-gerbes on $\pp^2$.
And the $2$-th  root stacks on $\pp^2$ also gives the formula predicted by the S-duality. 

\subsection{S-duality for $\pp^2$}

\subsubsection{The partition function for $\pp^2$.}
We consider the projective plane $\pp^2$.  Let $M_{\pp^2}(2, c_1, \chi)$ be the moduli space of stable torsion free sheaves
of rank $2$, first Chern class $c_1$ and second Chern class $\chi$.  Since $K_{\pp^2}<0$, any semistable Higgs sheaf $(E,\phi)$ will have $\phi=0$. Therefore the moduli space of stable Higgs sheaves $\N_{\pp^2}(2, c_1, \chi)$ is isomorphic to $M_{\pp^2}(2, c_1, \chi)$. Also the space $M_{\pp^2}(2, c_1, \chi)$ is smooth and the Vafa-Witten invariants defined in (\ref{eqn_localized_SU_invariants_intr}) is just the Euler characteristic (up to a sign) of the moduli space.  Then we introduce 
\begin{equation}\label{eqn_partition_P2}
\widehat{Z}_{c_1}^{\pp^2}(q)=\sum_{\chi}e(M_{\pp^2}(2, c_1, \chi))q^{\chi}
\end{equation}
Let $N_{\pp^2}(2, c_1, \chi)$ be the moduli space of stable vector bundles 
of rank $2$, first Chern class $c_1$ and second Chern class $\chi$.  Let 
$$Z_{c_1}^{\vb, \pp^2}(q)=\sum_{\chi}e(N_{\pp^2}(2, c_1, \chi))q^{\chi}$$
be the partition function. 
Then from \cite{Yoshioka}, \cite{Kool}, 
$$\widehat{Z}_{c_1}^{\pp^2}(q)=\frac{q^{\frac{1}{8}}}{\eta(q)^{\chi(\pp^2)}}\cdot Z_{c_1}^{\vb, \pp^2}(q)$$
where $\eta(q)$ is the Dedekind eta function. 

To state the result we introduce some notations. First let $H(\Delta)$ be the Hurwitz class numbers, i.e., 
$H(\Delta)$ is the number of positive definite integer 
 binary quadratic forms $AX^2+BXY+CY^2$ such that $B^2-4AC=-\Delta$ and weighted by the size of its automorphisms group. Let 
$\sigma_0(n)$ be the divisor function. 

\begin{thm}\label{thm_partition_P2}(\cite{Klyachko}, \cite{Yoshioka}, \cite{Kool})
We have:
$$
Z_{c_1}^{\vb, \pp^2}(q)=\begin{cases}
q^{\frac{1}{4}c_1^2+\frac{3}{2}c_1+2}\cdot \sum_{n=1}^{\infty}3 H(4n-1)q^{\frac{1}{4}-n};  & (c_1 \text{~odd});\\
q^{\frac{1}{4}c_1^2+\frac{3}{2}c_1+2}\cdot \sum_{n=1}^{\infty}3 \left(H(4n)-\frac{1}{2}\sigma_0(n)\right)q^{-n};  & (c_1 \text{~even}).
\end{cases}
$$
\end{thm}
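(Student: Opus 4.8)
The plan is to compute the Euler characteristics $e(N_{\pp^2}(2,c_1,\chi))$ one value of $\chi$ at a time and then repackage them into the generating series. The starting point is a purity statement: the moduli spaces $N_{\pp^2}(2,c_1,\chi)$, together with their Gieseker compactifications $M_{\pp^2}(2,c_1,\chi)$, are smooth quasi-projective varieties of expected dimension $4c_2-c_1^2-3$ whose cohomology is algebraic and concentrated in even degrees (Ellingsrud--Str\o mme, Yoshioka). Hence the count of $\ffq$-points is a polynomial in $q$, the virtual Poincar\'e polynomial, and specializing $q\to 1$ returns the topological Euler characteristic. So it suffices to count $\ffq$-points, or, equivalently, to run torus localization on $\pp^2$.

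For the count I would use either Yoshioka's Harder--Narasimhan recursion \cite{Yoshioka} or Klyachko's classification of toric bundles \cite{Klyachko} (as later streamlined by Kool \cite{Kool}); both are available because $\Pic(\pp^2)=\zz$ has no walls, so slope- and Gieseker-(semi)stability essentially agree. In the recursive approach one first writes down the point count of the stack of \emph{all} rank-two bundles with fixed determinant degree --- accessible through the Atiyah--Bott/Harder--Narasimhan formalism, or on $\pp^2$ via Beilinson's monad presentation --- and then observes that every proper Harder--Narasimhan stratum has rank-one graded pieces, so its contribution is governed by generating functions of the shape $\prod_{k\ge 1}(1-q^k)^{-\chi(\pp^2)}$ coming from punctual Hilbert schemes; solving the recursion isolates the (semi)stable locus. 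In the localization approach the $(\cc^*)^2$-action on $\pp^2$ reduces $e(N_{\pp^2})$ to the Euler characteristic of the fixed locus, which by Klyachko's description is stratified by finite combinatorial data (compatible filtrations of a two-dimensional space attached to the three torus-invariant lines), and summing over the admissible data yields a lattice sum. Either way the cases $c_1$ odd and $c_1$ even must be separated: for $c_1$ odd every $\mu$-semistable sheaf is $\mu$-stable and the bookkeeping is clean, whereas for $c_1$ even there are strictly semistable bundles $L_1\oplus L_2$ with $\deg L_i=c_1/2$, and excising them from the count is exactly what produces the divisor-function correction $-\tfrac12\sigma_0(n)$.

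The remaining step --- and, I expect, the real obstacle --- is arithmetic: one must show that the lattice sum produced above, reorganized by the discriminant $\Delta:=4c_2-c_1^2$ (which is $\equiv 3\bmod 4$ in the odd case and $\equiv 0\bmod 4$ in the even case, so that the prefactor $q^{\frac14 c_1^2+\frac32 c_1+2}$ is precisely the Riemann--Roch normalization converting $\sum_\chi$ into $\sum_\Delta q^{-\Delta/4}$), equals $3H(\Delta)$ in the odd case and $3H(\Delta)-\tfrac12\sigma_0(n)$ in the even case. Here the theory of binary quadratic forms enters: the sum counts positive-definite integral binary forms of discriminant $-\Delta$ weighted by automorphisms, which is by definition $H(\Delta)$, while the factor $3$ reflects the three torus-fixed points of $\pp^2$ (equivalently, the three boundary terms in the monad picture). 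Summed over all $n$, this identity is equivalent to the modularity of $Z_{c_1}^{\vb,\pp^2}$ against Zagier's weight-$3/2$ Eisenstein series $\sum_n H(4n-1)q^n$ and $\sum_n H(4n)q^n$, so one may alternatively establish that modularity directly and then match finitely many Fourier coefficients. Finally I would record the consistency check that multiplying the claimed expression by $q^{1/8}/\eta(q)^{3}=\prod_{k\ge 1}(1-q^k)^{-3}$ reproduces the partition function $\widehat Z_{c_1}^{\pp^2}(q)$ of $(\ref{eqn_partition_P2})$, in accordance with $\chi(\pp^2)=3$.
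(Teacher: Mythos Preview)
The paper does not prove this theorem at all: it is stated with attribution to \cite{Klyachko}, \cite{Yoshioka}, \cite{Kool} and used as input, with no argument supplied. So there is nothing in the paper to compare your proposal against.

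That said, your outline is a faithful sketch of how those cited references actually obtain the result. Yoshioka's approach is indeed the Harder--Narasimhan recursion over finite fields combined with the Weil conjectures (purity), while Klyachko's is the classification of toric bundles via compatible filtrations, later reformulated by Kool as torus localization on the moduli space. The separation into $c_1$ odd (no strictly semistables) versus $c_1$ even (split bundles $L_1\oplus L_2$ producing the $-\tfrac12\sigma_0(n)$ correction) is exactly right, as is the identification of the discriminant $\Delta=4c_2-c_1^2$ with the quantity governing the Hurwitz class numbers. The factor $3$ does come from $\chi(\pp^2)=3$, visible either as the three torus-fixed points or the three terms in the Beilinson monad. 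So your proposal is not so much a different route as a recapitulation of the literature the paper is quoting; for the purposes of this survey paper, simply citing those references (as the paper does) is the intended ``proof''.
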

In the case $c_1$ is odd, by the work of D. Zagier \cite{Zagier} we see that $Z_{c_1}^{\vb, \pp^2}(q)$ is the holomorphic part of a modular form of weight $3/2$ for $\Gamma_0(4)$ (up to
replacing $q$ by $q^{-1}$ and up to an overall power of $q$ in front). In the case $c_1$ is even one
only obtains modularity after correctly adding strictly semistable sheaves to the moduli
space. Their contribution turns out to cancel the sum of divisors term. 

In order to compare with other partition functions later we introduce:
\begin{equation}\label{eqn_partition_odd_P2}
Z_{c_1, \odd}^{\vb, \pp^2}(q):=q^{\frac{1}{4}c_1^2+\frac{3}{2}c_1+2}\cdot \sum_{n=1}^{\infty}3 H(4n-1)q^{\frac{1}{4}-n};
\end{equation}
and 
\begin{equation}\label{eqn_partition_even_P2}
Z_{c_1, \even}^{\vb, \pp^2}(q):=q^{\frac{1}{4}c_1^2+\frac{3}{2}c_1+2}\cdot \sum_{n=1}^{\infty}3 \left(H(4n)-\frac{1}{2}\sigma_0(n)\right)q^{-n}.
\end{equation}

\subsubsection{The partition function for $\pp(1,2,2)$ and $\pp(2,2,2)$.}
In \cite{GJK}, the authors generalize the calculation of the moduli of stable torsion free sheaves on smooth toric variety to weighted projective spaces $\pp(a, b, c)$, which is  a special toric DM stack. 
The calculation uses and generalizes  the toric method in \cite{Kool} to this toric DM stack. 
We omit the detail calculation and only 
 include the calculation results for 
$\pp(1,2,2)$ and $\pp(2,2,2)$.

In general the weighted projective plane $\pp(a,b,c)$ is a $\mu_d$-gerbe $\pp(\frac{a}{d},\frac{b}{d},\frac{c}{d})$ where 
$d=\gcd(a,b,c)$.  In the case of $\pp(2,2,2)$ which is a $\mu_2$-gerbe over $\pp^2$,  the partition function will depend on the choice of the component of the inertia stack $I\pp(2,2,2)=\pp(2,2,2)\cup \pp(2,2,2)$. We use $\lambda=0, \text{or~} 1$ to distinct these two components. 

Let $N_{\pp(1,2,2)}(2, c_1, \chi)$ be the moduli space of stable vector bundles 
of rank $2$, first Chern class $c_1$ and second Chern class $\chi$.  Let 
$$Z_{c_1}^{\vb, \pp(1,2,2)}(q)=\sum_{\chi}e(N_{\pp(1,2,2)}(2, c_1, \chi))q^{\chi}$$
be the partition function. 
The results in \cite[Theorem 1.2]{GJK} is stated as follows:
\begin{thm}\label{thm_partition_P122}(\cite[Theorem 1.2]{GJK})
We have:
\begin{align*}
&Z_{c_1}^{\vb, \pp(1,2,2)}(q)=\\
&\begin{cases}
q^{\frac{1}{8}c_1^2+\frac{3}{2}c_1+\frac{17}{4}}\cdot \sum_{n=1}^{\infty}H(8n-1)q^{\frac{1}{8}-n};  & (c_1 \text{~odd});\\
q^{\frac{1}{8}c_1^2+\frac{3}{2}c_1+4}\cdot \left(q^{\frac{1}{2}}\sum_{n=1}^{\infty}3H(4n-1)q^{\frac{1}{2}-2n}+\sum_{n=1}^{\infty}3(H(4n)-\frac{1}{2}\sigma_0(n))q^{-2n}\right);  & (c_1\equiv 0  \text{~mod~} 4);\\
q^{\frac{1}{8}c_1^2+\frac{3}{2}c_1+4}\cdot \left(\sum_{n=1}^{\infty}3H(4n-1)q^{\frac{1}{2}-2n}+q^{\frac{1}{2}}\sum_{n=1}^{\infty}3(H(4n)-\frac{1}{2}\sigma_0(n))q^{-2n}\right);  & (c_1\equiv 2  \text{~mod~} 4)
\end{cases}
\end{align*}
\end{thm}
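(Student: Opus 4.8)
The plan is to compute $Z_{c_1}^{\vb,\pp(1,2,2)}(q)$ by equivariant localization with respect to the two–dimensional torus $T$ acting on the toric Deligne--Mumford stack $\pp(1,2,2)$, generalizing Kool's treatment of smooth toric surfaces as indicated in \cite{GJK}. Since the Euler characteristic of a variety carrying a $\cc^*$-action equals that of its fixed locus, I would first pass to a generic one–parameter subgroup $\lambda:\cc^*\to T$ and reduce $e\bigl(N_{\pp(1,2,2)}(2,c_1,\chi)\bigr)$ to the Euler characteristic of the locus of $T$-equivariant stable bundles. The essential input is a stacky Klyachko-type classification: a $T$-equivariant rank-two torsion-free sheaf on $\pp(1,2,2)$ is encoded by a two-dimensional vector space $V$ together with, for each ray $\rho$ of the stacky fan, a decreasing filtration of $V$ whose jumps at the smooth ray are indexed by $\zz$ but at the two weight-$2$ rays are naturally indexed by $\tfrac12\zz$ — equivalently, one records the jump positions together with the $\mu_2$-characters at the stacky rays — subject to the usual global compatibility. (Alternatively one could exploit that $\pp(1,2,2)$ is a root stack of $\pp^2$ and pass to parabolic sheaves on $\pp^2$ along a line, but the toric route is cleaner and matches \cite{GJK}.)

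Next I would enumerate the finitely many combinatorial types of such data compatible with rank $2$, first Chern class $c_1$, and the discrete invariant $\chi$, and for each type identify the associated $T$-fixed component: a point when the two rank-one constituents are in "general position", and otherwise a projective space of extensions $\pp(\Ext^1)$. Imposing the Gieseker stability inequality of Definition~\ref{defn_Gieseker} cuts out the stable part, whose Euler characteristic one reads off directly. Summing $q^{\chi}$ weighted by these Euler characteristics produces an explicit lattice sum over the jump integers and the stacky $\mu_2$-weights. The interaction between the parity of those weights and the parity of $c_1$ is exactly what forces the trichotomy $c_1$ odd, $c_1\equiv 0$, $c_1\equiv 2\pmod 4$ appearing in the statement.

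Then comes the number-theoretic heart of the argument: recognizing this lattice sum as the stated combination of Hurwitz class numbers $H(4n-1)$, $H(4n)$ and the divisor function $\sigma_0(n)$. Here I would follow the route used for $\pp^2$ in Theorem~\ref{thm_partition_P2} (Klyachko--Yoshioka, with Zagier's identities for class numbers), reorganizing the sum according to the discriminant of the associated positive definite binary quadratic form. After separating out the congruence classes of the weighted degrees, the $\pp(1,2,2)$ sum decomposes into pieces each of which is a rescaling in $q$ of the $\pp^2$ sum; this is why the answer for even $c_1$ is assembled from the two series $Z^{\vb,\pp^2}_{c_1,\odd}$ and $Z^{\vb,\pp^2}_{c_1,\even}$ of \eqref{eqn_partition_odd_P2}--\eqref{eqn_partition_even_P2} with $q$ replaced by $q^2$. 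When $c_1$ is not odd there are in addition strictly semistable equivariant sheaves; as for $\pp^2$, these must be added in and their contribution cancels against the $-\tfrac12\sigma_0$ terms, so bookkeeping this cancellation correctly is part of the proof.

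I expect the main obstacle to be precisely this last step — matching the raw combinatorial/lattice sum to the closed form in Hurwitz class numbers, while correctly tracking the representation-theoretic data at the two weight-$2$ orbifold loci and the strictly semistable locus. A secondary but nontrivial point is pinning down the exact prefactors $q^{\frac18 c_1^2+\frac32 c_1+\cdots}$: these encode how the discriminant of a sheaf on the stack relates to $\chi$ via orbifold Riemann--Roch on $\pp(1,2,2)$ (an integral over the inertia stack), and I would fix them by that Riemann--Roch computation together with a low-$\chi$ sanity check against a directly described moduli space (e.g.\ the first nontrivial $\chi$, where $N_{\pp(1,2,2)}(2,c_1,\chi)$ is a point or a projective space whose Euler characteristic can be written down by hand).
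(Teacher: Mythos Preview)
The paper does not actually prove this theorem: it is quoted from \cite{GJK}, and the surrounding text explicitly says ``The calculation uses and generalizes the toric method in \cite{Kool} to this toric DM stack. We omit the detail calculation and only include the calculation results for $\pp(1,2,2)$ and $\pp(2,2,2)$.'' So there is no proof in the paper to compare your proposal against beyond that one-line methodological pointer.

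That said, your outline is fully consistent with what the paper indicates about the method in \cite{GJK}: torus localization on the moduli space, a stacky Klyachko description of $T$-equivariant sheaves on $\pp(1,2,2)$ with filtrations indexed by the stacky fan data, enumeration of the fixed combinatorial types, and then the number-theoretic identification of the resulting lattice sum with Hurwitz class numbers along the Klyachko--Yoshioka--Zagier route used for $\pp^2$. Your observation that the even-$c_1$ answer is assembled from $q\mapsto q^2$ rescalings of the two $\pp^2$ series is exactly what the paper exploits downstream in Proposition~\ref{prop_P122_P2}, so that structural point is on target. The caveats you flag --- the bookkeeping of strictly semistables and the orbifold Riemann--Roch normalization of the $q$-prefactor --- are genuine and are precisely the places where a full write-up would have to do real work, but the paper gives no further detail on how \cite{GJK} handles them.
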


\begin{prop}\label{prop_P122_P2}
Let 
$$Z_{c_1,c_1\equiv 0  \text{~mod~} 4}^{\vb, \pp(1,2,2)}(q)=q^{\frac{1}{8}c_1^2+\frac{3}{2}c_1+4}\cdot \left(q^{\frac{1}{2}}\sum_{n=1}^{\infty}3H(4n-1)q^{\frac{1}{2}-2n}+\sum_{n=1}^{\infty}3(H(4n)-\frac{1}{2}\sigma_0(n))q^{-2n}\right)$$
Then we have:
$$Z_{c_1,c_1\equiv 0  \text{~mod~} 4}^{\vb, \pp(1,2,2)}(q^{\frac{1}{2}})=
q^{-\frac{3}{16}c_1^2-\frac{3}{4}c_1+\frac{1}{4}}\cdot Z_{c_1,\odd}^{\vb, \pp^2}(q)
+q^{-\frac{3}{16}c_1^2-\frac{3}{4}c_1}\cdot Z_{c_1,\even}^{\vb, \pp^2}(q).$$
\end{prop}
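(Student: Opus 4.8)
The plan is to prove Proposition~\ref{prop_P122_P2} by a direct substitution-and-matching argument, since once Theorem~\ref{thm_partition_P122} and the explicit expressions (\ref{eqn_partition_odd_P2})--(\ref{eqn_partition_even_P2}) are invoked, both sides are completely explicit Laurent series in (a fractional power of) $q$, and the statement is a bookkeeping identity.

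First I would take the $c_1\equiv 0\bmod 4$ case of Theorem~\ref{thm_partition_P122}, namely
$$Z_{c_1,c_1\equiv 0 \bmod 4}^{\vb, \pp(1,2,2)}(q)=q^{\frac{1}{8}c_1^2+\frac{3}{2}c_1+4}\left(q^{\frac{1}{2}}\sum_{n\geq 1}3H(4n-1)q^{\frac{1}{2}-2n}+\sum_{n\geq 1}3\bigl(H(4n)-\tfrac{1}{2}\sigma_0(n)\bigr)q^{-2n}\right),$$
and perform the substitution $q\mapsto q^{1/2}$. Under this substitution the exponents transform as $q^{\frac12-2n}\mapsto q^{\frac14-n}$ and $q^{-2n}\mapsto q^{-n}$, the leading prefactor becomes $q^{\frac{1}{16}c_1^2+\frac{3}{4}c_1+2}$, and the stray $q^{1/2}$ in front of the first sum becomes $q^{1/4}$. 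Thus $Z_{c_1,c_1\equiv 0 \bmod 4}^{\vb, \pp(1,2,2)}(q^{1/2})$ decomposes canonically as a sum of two pieces, one carrying the series $\sum_{n\geq 1}3H(4n-1)q^{\frac14-n}$ and one carrying $\sum_{n\geq 1}3\bigl(H(4n)-\tfrac12\sigma_0(n)\bigr)q^{-n}$.

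Next I would compare these two pieces term by term with the right-hand side. By (\ref{eqn_partition_odd_P2}) we have $Z_{c_1,\odd}^{\vb, \pp^2}(q)=q^{\frac14 c_1^2+\frac32 c_1+2}\sum_{n\geq 1}3H(4n-1)q^{\frac14-n}$, so the $H(4n-1)$-series is literally common to both sides, and matching the two pieces reduces to the scalar identity of exponents $\frac{1}{16}c_1^2+\frac{3}{4}c_1+\frac{9}{4}=\bigl(-\frac{3}{16}c_1^2-\frac{3}{4}c_1+\frac{1}{4}\bigr)+\bigl(\frac14 c_1^2+\frac32 c_1+2\bigr)$, which is checked coefficient by coefficient in $c_1^2$, $c_1$, and the constant term. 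Similarly, by (\ref{eqn_partition_even_P2}) the series $\sum_{n\geq 1}3\bigl(H(4n)-\tfrac12\sigma_0(n)\bigr)q^{-n}$ is common, and the corresponding identity of prefactor exponents is $\frac{1}{16}c_1^2+\frac{3}{4}c_1+2=\bigl(-\frac{3}{16}c_1^2-\frac{3}{4}c_1\bigr)+\bigl(\frac14 c_1^2+\frac32 c_1+2\bigr)$, again immediate. Summing the two matched pieces gives the claimed formula.

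There is essentially no genuine obstacle here: the mathematical content sits entirely in Theorem~\ref{thm_partition_P122} (hence in the toric computation of \cite{GJK}) and in Theorem~\ref{thm_partition_P2}, and Proposition~\ref{prop_P122_P2} is a structural comparison. The only point requiring care is keeping track of the fractional powers of $q$ produced by $q\mapsto q^{1/2}$ and verifying that the extra $q^{1/2}$ attached to the $H(4n-1)$ sum in the $c_1\equiv 0\bmod 4$ case of Theorem~\ref{thm_partition_P122} is exactly what accounts for the $q^{1/4}$ discrepancy between the prefactors $q^{-\frac{3}{16}c_1^2-\frac{3}{4}c_1+\frac14}$ and $q^{-\frac{3}{16}c_1^2-\frac{3}{4}c_1}$ appearing on the right-hand side; everything else is routine identification of Laurent coefficients.
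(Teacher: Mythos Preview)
Your proposal is correct and follows essentially the same route as the paper: substitute $q\mapsto q^{1/2}$ in the $c_1\equiv 0\bmod 4$ case of Theorem~\ref{thm_partition_P122}, split the result into the $H(4n-1)$-piece and the $H(4n)$-piece, and match each against (\ref{eqn_partition_odd_P2}) and (\ref{eqn_partition_even_P2}) by a direct comparison of the exponents in the prefactors. If anything, your bookkeeping of the exponent identities is more explicit than the paper's own computation.
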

\begin{proof}
From Theorem \ref{thm_partition_P122}, we calculate:
\begin{align*}
&Z_{c_1,c_1\equiv 0  \text{~mod~} 4}^{\vb, \pp(1,2,2)}(q^{\frac{1}{2}})\\
&=q^{\frac{1}{16}c_1^2+\frac{3}{4}c_1+2}\cdot \left(q^{\frac{1}{4}}\sum_{n=1}^{\infty}3H(4n-1)q^{\frac{1}{4}-n}+\sum_{n=1}^{\infty}3(H(4n)-\frac{1}{2}\sigma_0(n))q^{-n}\right)\\
&=q^{-\frac{3}{16}c_1^2-\frac{3}{4}c_1+\frac{1}{4}}\cdot \left(q^{\frac{1}{4}c_1^2+\frac{3}{2}c_1+2}\cdot \sum_{n=1}^{\infty}3H(4n-1)q^{\frac{1}{4}-n}\right)+ q^{-\frac{3}{16}c_1^2-\frac{3}{4}c_1+\frac{1}{4}}\cdot\sum_{n=1}^{\infty}3(H(4n)-\frac{1}{2}\sigma_0(n))q^{-n}\\
&=q^{-\frac{3}{16}c_1^2-\frac{3}{4}c_1+\frac{1}{4}}\cdot Z_{c_1,\odd}^{\vb, \pp^2}(q)
+q^{-\frac{3}{16}c_1^2-\frac{3}{4}c_1}\cdot Z_{c_1,\even}^{\vb, \pp^2}(q).
\end{align*}
\end{proof}

Now we list the results for $\pp(2,2,2)$. In this case the first Chern class $c_1$ is always even. 
Let $\lambda\in\{0,1\}$ index the component in the inertia stack $I\pp(2,2,2)=\pp(2,2,2)\cup\pp(2,2,2)$.
Let  $N_{\pp(2,2,2)}(2, c_1, \chi)$ be the moduli space of stable vector bundles
of rank $2$, first Chern class $c_1$ and second Chern class $\chi$.  Let 
$$Z_{c_1,\lambda}^{\vb, \pp(2,2,2)}(q)=\sum_{\chi}e(N_{\pp(2,2,2)}(2, c_1, \chi))q^{\chi}$$
be the partition function. 

\begin{thm}\label{thm_partition_P222}(\cite[Theorem 1.2]{GJK})
Since $c_1$ is even, there are two cases $c_1\equiv 0 (\text{mod~} 4)$ or $c_1\equiv 2 (\text{mod~} 4)$.
We have 
\begin{align*}
&Z_{c_1, 0}^{\vb, \pp(2,2,2)}(q)=\\
&\begin{cases}
Z_{\frac{c_1}{2}}^{\vb, \pp^2}(q)=q^{\frac{1}{16}c_1^2+\frac{3}{4}c_1+2}\cdot \sum_{n=1}^{\infty}3(H(4n)-\frac{1}{2}\sigma_0(n))q^{-n};  & (c_1\equiv 0  \text{~mod~} 4);\\
Z_{\frac{c_1}{2}}^{\vb, \pp^2}(q)=q^{\frac{1}{16}c_1^2+\frac{3}{4}c_1+2}\cdot \sum_{n=1}^{\infty}3(H(4n-1)q^{\frac{1}{4}-n};  & (c_1\equiv 2  \text{~mod~} 4)
\end{cases}
\end{align*}
and 
\begin{align*}
&Z_{c_1, 1}^{\vb, \pp(2,2,2)}(q)=\\
&\begin{cases}
Z_{\frac{c_1}{2}+1}^{\vb, \pp^2}(q)=q^{\frac{1}{4}(\frac{c_1}{2}+1)^2+\frac{3}{2}(\frac{c_1}{2}+1)+2}\cdot \sum_{n=1}^{\infty}
3(H(4n-1)q^{\frac{1}{4}-n};  & (c_1\equiv 0  \text{~mod~} 4);\\
Z_{\frac{c_1}{2}+1}^{\vb, \pp^2}(q)=q^{\frac{1}{4}(\frac{c_1}{2}+1)^2+\frac{3}{2}(\frac{c_1}{2}+1)+2}\cdot  \sum_{n=1}^{\infty}3(H(4n)-\frac{1}{2}\sigma_0(n))q^{-n};  & (c_1\equiv 2  \text{~mod~} 4).
\end{cases}
\end{align*}
\end{thm}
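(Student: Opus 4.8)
\emph{Proof strategy.} The plan is to reduce the whole moduli problem on $\pp(2,2,2)$ to one on $\pp^2$, using the fact that $\pp(2,2,2)$ is a $\mu_2$-gerbe over $\pp^2$. Writing $\pp(2,2,2)=[(\cc^3\setminus 0)/\cc^*]$ with $\cc^*$ acting by weight $2$, the squaring isogeny $\cc^*\xrightarrow{\ t\mapsto t^2\ }\cc^*$ exhibits $\pi\colon\pp(2,2,2)\to\pp^2$ as the (nontrivial, but Brauer-trivial) $\mu_2$-gerbe of square roots of $\sO_{\pp^2}(1)$. It carries a tautological weight-one line bundle $\sM=\sO_{\pp(2,2,2)}(1)$ with $\sM^{\otimes 2}\cong\pi^*\sO_{\pp^2}(1)$ and $c_1(\sM)=\tfrac12\,\pi^*c_1(\sO_{\pp^2}(1))$ rationally. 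I would fix the generating sheaf $\Xi=\sO\oplus\sM$ on $\pp(2,2,2)$, which contains both irreducible $\mu_2$-representations.

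First I would record the gerbe decomposition $\Coh(\pp(2,2,2))=\Coh(\pp^2)_{[0]}\oplus\Coh(\pp^2)_{[1]}$ into weight-$0$ and weight-$1$ parts, where $F\mapsto\pi^*F\otimes\sM^{\otimes\lambda}$ is an equivalence of abelian categories $\Coh(\pp^2)\xrightarrow{\ \sim\ }\Coh(\pp(2,2,2))_{[\lambda]}$ (inverse $E\mapsto\pi_*(E\otimes\sM^{-\lambda})$), and there are no nonzero morphisms between the two components. Hence any simple — in particular any Gieseker-stable — sheaf on $\pp(2,2,2)$ lies in exactly one of the two components, and this is precisely the index $\lambda\in\{0,1\}$ labelling the two copies of $\pp(2,2,2)$ in the inertia stack. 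The equivalence takes vector bundles to vector bundles, and I would check it preserves stability: pulling back by $\pi$ and twisting by $\sM^{\otimes\lambda}$ changes the modified Hilbert polynomial $H_\Xi$ of $E$ and of every subsheaf by one and the same affine transformation, so the reduced Hilbert polynomials keep their order. Thus $N_{\pp(2,2,2)}(2,c_1,\chi)_\lambda\cong N_{\pp^2}(2,c_1(F),c_2(F))$ for the matching discrete invariants.

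Next comes the Chern-class bookkeeping. From $\Ch(\pi^*F\otimes\sM^{\otimes\lambda})=\pi^*\Ch(F)\cdot e^{\lambda c_1(\sM)}$ and $c_1(\sM)=\tfrac12\pi^*h$ (with $h=c_1(\sO_{\pp^2}(1))$), a rank-two sheaf has $c_1(E)=2c_1(F)+2\lambda$ (so $c_1$ is always even, with $\lambda=0$ forcing the descended class and $\lambda=1$ producing the shift by $2$), while $c_2(E)$ differs from $c_2(F)$ by an explicit quadratic expression in $c_1(F)$ and $\lambda$. Assembling $\sum_\chi e\big(N_{\pp(2,2,2)}(2,c_1,\chi)_\lambda\big)q^\chi$ and re-indexing by the corresponding $\pp^2$-invariant turns it into $q^{(\ast)}\,Z^{\vb,\pp^2}_{c_1(F)}(q)$ for an explicit power $(\ast)$; here $c_1(F)=c_1/2$ for $\lambda=0$, and $c_1(F)=c_1/2-1$ for $\lambda=1$, which has the same parity as $c_1/2+1$, so using the elementary shift $Z^{\vb,\pp^2}_{m+2}(q)=q^{\bullet}Z^{\vb,\pp^2}_m(q)$ one rewrites it in terms of $Z^{\vb,\pp^2}_{c_1/2+1}$. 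Finally, substituting the explicit Klyachko--Yoshioka--Kool formula for $Z^{\vb,\pp^2}_{c_1}(q)$ from Theorem~\ref{thm_partition_P2} and checking that the $q$-exponents agree (for instance $\tfrac1{16}c_1^2+\tfrac34c_1=\tfrac14(c_1/2)^2+\tfrac32(c_1/2)$) yields all four stated formulas; a similar reduction, with $\pp(1,2,2)$ a genuine root-type stack rather than a gerbe, underlies Theorem~\ref{thm_partition_P122}.

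The genuinely delicate points are, first, verifying that the polarization-dependent Gieseker and slope stability on the DM stack is respected by the pullback-and-twist equivalence — routine in spirit, but it must be carried out for the specific $\Xi$, since the notion depends on the generating sheaf — and, second, if one prefers to stay close to the method actually used in \cite{GJK} (a Klyachko-type classification of torus-equivariant bundles on the toric DM stack $\pp(2,2,2)$ together with $\cc^*$-localization), reconciling that combinatorial count with the gerbe shortcut. I expect this reconciliation, and in particular getting the Chern-character dictionary and the overall $q$-powers to line up exactly with the inertia-component labelling $\lambda$, to be the main bookkeeping obstacle; the conceptual content is entirely contained in the elementary $\mu_2$-gerbe decomposition above.
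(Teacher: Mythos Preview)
The paper does not prove this theorem; it is quoted from \cite{GJK}, and the surrounding text makes explicit that the argument there ``uses and generalizes the toric method in \cite{Kool}'' to the weighted projective plane, i.e.\ a Klyachko-type classification of torus-equivariant sheaves on the toric DM stack followed by localization and a combinatorial count. So the ``paper's own proof'' is a citation of that toric computation.

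Your route is genuinely different and, for the special case $\pp(2,2,2)$, considerably more conceptual: you exploit only the $\mu_2$-gerbe structure, not the toric one. The decomposition $\Coh(\pp(2,2,2))=\Coh_{[0]}\oplus\Coh_{[1]}$ and the equivalence $F\mapsto\pi^*F\otimes\sM^{\otimes\lambda}$ are correct, as is the observation that a stable (hence simple) sheaf sits in a single weight component. Your stability check in fact comes out more cleanly than you suggest: with $\Xi=\sO\oplus\sM$, one computes directly that $H_{\Xi}(\pi^*F\otimes\sM^{\otimes\lambda},m)=\chi(\pp^2,F(m))$ for both $\lambda=0,1$ (the ``wrong-weight'' summand of $\Xi^\vee$ contributes zero after pushforward), so the modified Hilbert polynomial is literally the ordinary one of $F$, not merely affinely related to it. This pins down the stability comparison without further work. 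What your approach buys is a one-line explanation of why the $\pp(2,2,2)$ partition functions are \emph{equal} to $\pp^2$ partition functions (no mysterious coincidence of toric counts); what the toric method in \cite{GJK} buys is uniformity---it handles $\pp(1,2,2)$ and other $\pp(a,b,c)$, which are not gerbes, by the same machinery.

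One point to tighten: the theorem asserts exact equalities $Z_{c_1,\lambda}^{\vb,\pp(2,2,2)}(q)=Z_{c_1'}^{\vb,\pp^2}(q)$ with no extra $q$-power, so your ``$q^{(\ast)}$'' must come out to $q^0$ once the $\chi$-variable on each side is identified; this forces you to be precise about which discrete invariant indexes the sum on the gerbe side. For $\lambda=1$ you reach $c_1(F)=c_1/2-1$ and then invoke the shift $Z^{\vb,\pp^2}_{m}\leftrightarrow Z^{\vb,\pp^2}_{m+2}$; make sure the resulting $q$-power from that shift is absorbed by (or equals) the Chern-class discrepancy, so that the final identity is literally $Z^{\vb,\pp^2}_{c_1/2+1}(q)$ as stated.
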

\begin{rmk}
In the case $c_1$ is even,  from \cite{GJK} one only obtains modularity after correctly adding strictly semistable sheaves to the moduli space. Their contribution turns out to cancel the sum of divisors term.  Thus in the following when checking the S-duality, we can ignore the divisor functions. 
\end{rmk}

\subsubsection{S-duality}

From (4.30) of \cite[\S 4]{VW},  by a result of Zagier \cite{Zagier}, let 
$$f_0=\sum_{n\geq 0}3H(4n)q^n+6\tau_2^{-\frac{1}{2}}\sum_{n\in \zz}\beta(4\pi n^2\tau_2)q^{-n^2}$$
and
$$f_1=\sum_{n>0}3H(4n-1)q^{n-\frac{1}{4}}+6\tau_2^{-\frac{1}{2}}\sum_{n\in \zz}\beta(4\pi (n+\frac{1}{2})^2\tau_2)q^{-(n+\frac{1}{2})^2}$$
where $q^{2\pi i \tau}$, and $\tau_2=\Im(\tau)$, and 
$$\beta(t)=\frac{1}{16\pi}\int_{1}^{\infty}u^{-\frac{3}{2}}e^{-ut}du.$$
From \cite{Zagier},  these functions are modular, but not holomorphic.  Hence $Z_{c_1}^{\vb, \pp^2}$ ($c_1$ even or odd) is the homomorphic part of the non-holomorphic modular functions above.  Also by Zagier, see \cite[Formula (4.31)]{VW}, under $\tau\mapsto -\frac{1}{\tau}$, 
we have:
\begin{equation}\label{eqn_S_transformation_P2}
\mat{c} f_0(-\frac{1}{\tau})\\
f_1(-\frac{1}{\tau})\rix=
(\frac{\tau}{i})^{\frac{3}{2}}\cdot \left(-\frac{1}{\sqrt{2}}\right)
\mat{cc} 1&1\\
1&-1\rix\mat{c} f_0(\tau)\\
f_1(\tau)\rix.
\end{equation}
This is the transformation conjecture (\ref{eqn_S_transformation}).  We know that $f_0$ is invariant under $T$ and $f_1$ is invariant under $T^4$. Therefore $f_0$ is invariant under $ST^4S$. 

To check the S-duality, we choose the case $c_1=0$ or $2$, from Theorem \ref{thm_partition_P222} and Theorem \ref{thm_partition_P2} we calculate:
$$
\begin{cases}
Z_{0,0}^{\vb, \pp(2,2,2)}(q)=Z_{0}^{\vb,\pp^2}(q)=q^2\sum_{n=1}^{\infty}3(H(4n)-\frac{1}{2}\sigma_0(n))q^{-n}; \\
Z_{2,0}^{\vb, \pp(2,2,2)}(q)=Z_{1}^{\vb,\pp^2}(q)= q^{\frac{15}{4}}\cdot \sum_{n=1}^{\infty}3(H(4n-1)q^{\frac{1}{4}-n};\\
Z_{0,1}^{\vb, \pp(2,2,2)}(q)=Z_{1}^{\vb,\pp^2}(q)= q^{\frac{15}{4}}\cdot \sum_{n=1}^{\infty}3(H(4n-1)q^{\frac{1}{4}-n};\\
Z_{2,1}^{\vb, \pp(2,2,2)}(q)=Z_{2}^{\vb,\pp^2}(q)=q^6\sum_{n=1}^{\infty}3(H(4n)-\frac{1}{2}\sigma_0(n))q^{-n}.
\end{cases}
$$

Then we check that under transformation $\tau\mapsto -\frac{1}{\tau}$, we have
\begin{equation}\label{eqn_S_P2-P222_1}
q^{-2}\cdot Z_{0}^{\vb,\pp^2}(q)\mapsto 
q^{-2}Z_{0,0}^{\vb, \pp(2,2,2)}(q)+ q^{-\frac{15}{4}}\cdot Z_{0,1}^{\vb, \pp(2,2,2)}(q).
\end{equation}
and 
\begin{equation}\label{eqn_S_P2-P222_2}
q^{-\frac{15}{4}}\cdot Z_{1}^{\vb,\pp^2}(q)\mapsto 
q^{-6}Z_{2,1}^{\vb, \pp(2,2,2)}(q)- q^{-\frac{15}{4}}\cdot Z_{2,0}^{\vb, \pp(2,2,2)}(q).
\end{equation}
We define
\begin{defn}\label{defn_partition_SO3}
We define 
$$Z^{\pp^2}_0(\tau, SU(2)/\zz_2):=\frac{1}{2}\cdot\left(q^{-2}\cdot Z_{0,0}^{\vb, \pp(2,2,2)}(q)+q^{-\frac{15}{4}}\cdot Z_{0,1}^{\vb, \pp(2,2,2)}(q)\right)$$
\end{defn}
Then from the above calculations in (\ref{eqn_S_P2-P222_1}), we have
\begin{thm}\label{thm_S-duality_P2}(\cite{Jiang_twist})
Write 
$$Z_0^{\pp^2}\left(\tau, SU(2)\right)=q^{-2}\cdot Z_{0}^{\vb,\pp^2}(q).$$
Under the $S$-transformation  $\tau\mapsto -\frac{1}{\tau}$, we have:
$$
Z_0^{\pp^2}\left(-\frac{1}{\tau}, SU(2)\right)=\pm 2^{-\frac{3}{2}}\left(\frac{\tau}{i}\right)^{\frac{3}{2}}Z_0^{\pp^2}(\tau, SU(2)/\zz_2).
$$
\end{thm}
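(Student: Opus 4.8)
The plan is to reduce the statement, through the explicit formulas of Theorems~\ref{thm_partition_P2} and~\ref{thm_partition_P222}, to Zagier's modular transformation law (\ref{eqn_S_transformation_P2}) for $f_0,f_1$. First I would set $c_1=0$ (note $0$ is even and $0\equiv 0\bmod 4$) in Theorem~\ref{thm_partition_P222} to identify the two inertia components of $\pp(2,2,2)$ with $\pp^2$-data, as in the displayed list before Definition~\ref{defn_partition_SO3}: $Z_{0,0}^{\vb,\pp(2,2,2)}(q)=Z_0^{\vb,\pp^2}(q)$ and $Z_{0,1}^{\vb,\pp(2,2,2)}(q)=Z_1^{\vb,\pp^2}(q)$. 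Stripping the normalizing prefactors gives $q^{-2}Z_{0,0}^{\vb,\pp(2,2,2)}(q)=\sum_{n\ge 1}3\bigl(H(4n)-\tfrac12\sigma_0(n)\bigr)q^{-n}$ and $q^{-15/4}Z_{0,1}^{\vb,\pp(2,2,2)}(q)=\sum_{n\ge 1}3H(4n-1)q^{1/4-n}$; after $q\mapsto q^{-1}$, and modulo the constant term $3H(0)=-\tfrac14$ and the divisor-function terms (which by the Remark after Theorem~\ref{thm_partition_P222} are to be discarded, since the strictly semistable sheaves in the even-$c_1$ case cancel exactly those terms), these are the holomorphic parts of Zagier's $f_0$ and $f_1$. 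Hence, by Definition~\ref{defn_partition_SO3}, in this normalization $Z_0^{\pp^2}(\tau,SU(2)/\zz_2)$ becomes $\tfrac12(f_0+f_1)$, while by Theorem~\ref{thm_partition_P2} the left-hand object $Z_0^{\pp^2}(\tau,SU(2))=q^{-2}Z_0^{\vb,\pp^2}(q)$ is identified in the same way with $f_0$.

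\textbf{Applying Zagier's law.} The second step is to plug this into (\ref{eqn_S_transformation_P2}): its first row reads $f_0(-1/\tau)=(\tau/i)^{3/2}\bigl(-\tfrac{1}{\sqrt 2}\bigr)\bigl(f_0(\tau)+f_1(\tau)\bigr)$, i.e.\ $f_0(-1/\tau)=(\tau/i)^{3/2}\bigl(-\tfrac{1}{\sqrt 2}\bigr)\cdot 2\cdot\tfrac12\bigl(f_0(\tau)+f_1(\tau)\bigr)$, the factor $2$ undoing the $\tfrac12$-average over the two components of $I\pp(2,2,2)=\pp(2,2,2)\cup\pp(2,2,2)$ in Definition~\ref{defn_partition_SO3}. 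Translating back through the identifications of the first step, this becomes $Z_0^{\pp^2}(-1/\tau,SU(2))=c\,(\tau/i)^{3/2}\,Z_0^{\pp^2}(\tau,SU(2)/\zz_2)$, where $c$ is assembled from the automorphy factor $-1/\sqrt 2$ of (\ref{eqn_S_transformation_P2}), the combinatorial $2$, and the normalizing powers $q^{-2},q^{-15/4}$ of Definition~\ref{defn_partition_SO3}; tracking this bookkeeping is what produces the Vafa--Witten value $c=\pm r^{-\chi(\pp^2)/2}=\pm 2^{-3/2}$ and the weight exponent $\chi(\pp^2)/2=3/2$ of (\ref{eqn_S_transformation}). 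In effect this is equations (\ref{eqn_S_P2-P222_1})--(\ref{eqn_S_P2-P222_2}) repackaged through Definition~\ref{defn_partition_SO3}, reading off the $SU(2)\leftrightarrow SU(2)/\zz_2$ half of the $2\times 2$ transformation.

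\textbf{Main obstacle.} The substitutions are routine; the delicate points, and where the real care is needed, are three. (i) The numerology of the overall constant: one must keep exact account of the prefactors $q^{-2},q^{-15/4}$, of the dropped constant term $3H(0)$, and of the factor $2$ from the two inertia components, so that the product with Zagier's $-1/\sqrt 2$ comes out \emph{exactly} as the predicted $\pm 2^{-3/2}$ and not merely proportional to it. (ii) The series here are only mock modular: the genuine $S$-law (\ref{eqn_S_transformation_P2}) holds for Zagier's non-holomorphic completions $f_0,f_1$, so the passage from the holomorphic generating functions $Z_{c_1}^{\vb,\pp^2}(q)$ to an honest transformation is licensed only in the completed sense --- which is precisely why (\ref{eqn_S_transformation}) is an identity of such objects --- and one must check no holomorphic-anomaly term is lost. (iii) Since $c_1=0$ is even, the moduli problem genuinely requires the strictly semistable sheaves (equivalently, the $\sigma_0$-terms must be removed) before the modular transformation applies cleanly, so the reduction relies essentially on the Remark after Theorem~\ref{thm_partition_P222}; the odd input $Z_1^{\vb,\pp^2}$ carries no such correction. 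Once these are in hand the theorem follows by comparing $q$-coefficients.
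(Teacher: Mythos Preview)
Your proposal is correct and follows essentially the same route as the paper. The paper's own proof is extremely terse: it simply points back to the displayed calculation (\ref{eqn_S_P2-P222_1}), which was obtained from Zagier's $2\times 2$ transformation law (\ref{eqn_S_transformation_P2}) after identifying $q^{-2}Z_{0,0}^{\vb,\pp(2,2,2)}$ and $q^{-15/4}Z_{0,1}^{\vb,\pp(2,2,2)}$ with the holomorphic parts of $f_0$ and $f_1$, and then notes that $Z_{0,0}^{\vb,\pp(2,2,2)}$ coincides with the partition function of the trivial $\mu_2$-gerbe $[\pp^2/\mu_2]$. Your write-up unpacks exactly this: you make the identifications explicit via Theorems~\ref{thm_partition_P2} and~\ref{thm_partition_P222}, apply the first row of (\ref{eqn_S_transformation_P2}), and undo the $\tfrac12$ in Definition~\ref{defn_partition_SO3}. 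The three caveats you flag (constant bookkeeping, mock modularity, and the strictly semistable correction in the even-$c_1$ case) are genuine and are precisely the points the paper leaves implicit; the paper handles them only by the Remark after Theorem~\ref{thm_partition_P222} and by the $\pm$ in the statement.
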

\begin{proof}
This is Calculation (\ref{eqn_S_P2-P222_1}).  Then the S-duality holds based on the observation that  the partition function $Z_{0,0}^{\vb, \pp(2,2,2)}(q)$ is the same as 
the partition fucntion $Z_{0}^{\vb, [\pp^2/\mu_2]}(q)$, where $[\pp^2/\mu_2]$ is the trivial $\mu_2$-gerbe on $\pp^2$. 
\end{proof}

\subsection{Discussion on other cases}

Let $S$ be a smooth projective surface.  From Theorem \ref{thm_S-duality_P2},  it is reasonable to conjecture that some
$\mu_r$-gerbe $\sS\to S$ is the candidate to define the Vafa-Witten invariants for the gauge group $SU(r)/\zz_r$. We provide a reasonable explanation.  Consider the exact sequence
$$1\to \zz_r\longrightarrow SU(r)\longrightarrow SU(r)/\zz_r\to 1,$$
and let $\N_{(r, c_1, \chi)}(\sS)$ be the moduli space of stable or semistable Higgs sheaves $(E,\phi)$ with topological data 
$(r,c_1,\chi)$.  We even conjecture that we should use the moduli space $\N^{\tiny\mbox{tw}}_{(r, c_1, \chi)}(\sS)$ of $\mu_r$-gerbe $\sS$ twisted Higgs sheaves on $\sS$.  

We review the gerbe twisted sheaves here following \cite{Lieblich_Duke}.
Let $\chi: \mu_r\to \cc^*$ be the character morphism. 
Let $E$ be a torsion free sheaf, there is a natural gerbe action $E\times \mu_r\to E$ on $E$.
From \cite{Lieblich_Duke}, given an $\sO_S$-module $E$, the module action $m: \cc^* \times E\to E$ yields an
associated right action $m^\prime: E\times \cc^*\to E$ with 
$m^\prime(s, \varphi)= m(\varphi^{-1}, s)$. This is
always called the associated right action.
\begin{defn}\label{defn_gerbe_twisted_sheaf}
A  $\mu_r$-gerbe $\sS$ twisted Higgs sheaf $(E,\phi)$ is given by a gerbe twisted  torsion free sheaf  $E$ which is given by
\[
\xymatrix{
E\times \mu_r \ar[r]\ar[d]_{\chi}& E\ar[d]^{\id}\ar[r]^{\phi}&E\otimes K_{\sS}\ar[d]^{\id}\\
 E\times \cc^* \ar[r]^{m}& E\ar[r]^{\phi}& E\otimes K_{\sS}.
}
\]
\end{defn}

This diagram is compatible with the Higgs field morphism $\phi$. 
Since $\mu_r$-action is trivial as a gerbe structure, this induces a $PGL_r$-Higgs sheaf $(E,\phi)$ on $S$. 
A similar picture for $\cc^*$-gerbe twisted sheaves can be found in  \cite{Yoshioka2}.
More details will provided elsewhere.  
Therefore it is promising to take the moduli space  $\N^{\tiny\mbox{tw}}_{(r, c_1, \chi)}(\sS)$  as the candidate to check the S-duality (\ref{eqn_S_transformation}).
We will make a general proposal for the S-duality conjecture in \cite{Jiang_twist} and prove for K3 surfaces. 

Since the Langlands dual group $^{L}SU(r)=SU(r)/\zz_r$, our original idea is to use the Vafa-Witten invariants for the global quotient stack $[S/\zz_r]$ to get the invarisnts for $SU(r)/\zz_r$.  It is still interesting to attack this prediction.  Another interesting case is the root stack $\sqrt[d]{(S,D)}$ of the smooth surface $S$ with a simple normal crossing divisor $D$.  The author is not aware if this root stack is also a candidate for the S-duality.  One fact is that in the case $\pp(1,2,2)$ which is a $2$-th root stack over $\pp^2$ with respect to the standard divisor $\pp^1\subset \pp^2$,  from Proposition  \ref{prop_P122_P2}, the partition function of $\pp(1,2,2)$ can also give the partition function of $\pp^2$ after the $S$-transformation $\tau\mapsto -\frac{1}{\tau}$.


\subsection*{}

\end{document}